\theoremstyle{plain}
\newtheorem{theorem}{Theorem}[section]
\newtheorem{corollary}[theorem]{Corollary}
\newtheorem{lemma}[theorem]{Lemma}
\newtheorem{proposition}[theorem]{Proposition}
\theoremstyle{definition}
\theoremstyle{remark}
\newtheorem*{remark}{Remark}
\numberwithin{equation}{section}
\newcommand{\N}{\mathbb N}
\newcommand{\Z}{\mathbb Z}
\newcommand{\vast}{\bBigg@{4}}
\newcommand{\Vast}{\bBigg@{5}}
\renewcommand{\pmod}[1]{\  \,  \left( \rm{mod} \,  #1 \right)}
\newlength{\parenheight}
\newlength{\parendepth}
\newlength{\parendrop}
\newcommand{\paren}[4]{%
	\settoheight{\parenheight}{\(#4 #2\)}%
	\settodepth{\parendepth}{\(#4 #2\)}
	\addtolength{\parendepth}{.5ex}
	\addtolength{\parenheight}{-.5ex}
	\addtolength{\parenheight}{\parendepth}
	\addtolength{\parendepth}{-.5\parenheight}
	\setlength{\parendrop}{-.5\parenheight}
	\addtolength{\parendrop}{.5ex}
	\raisebox{-\parendepth}{\(#4
		\left#1%
		\rule[\parendrop]{0pt}{\parenheight}%
		\right.\)}
	#2
	\raisebox{-\parendepth}{\(#4
		\left.%
		\rule[\parendrop]{0pt}{\parenheight}%
		\right#3\)}
}
\def\myleft#1#2\myright#3{%
	\mathchoice{%
		\paren{#1}{#2}{#3}{\displaystyle}%
	}{%
	\paren{#1}{#2}{#3}{\textstyle}%
}{%
\paren{#1}{#2}{#3}{\scriptstyle}%
}{%
\paren{#1}{#2}{#3}{\scriptscriptstyle}%
}%
}
\renewenvironment{proof}[1][Proof]{\begin{trivlist} \item[\hskip \labelsep {\bfseries #1:}]}{\qed\end{trivlist}}
\begin{document}
\allowdisplaybreaks
\author{Chris Jennings-Shaffer and Antun Milas}
%\subjclass[2010]{}
%\keywords{}

\address{ Department of Mathematics, University of Denver, 2390 S. York St. Denver, CO 80208.}

\email{christopher.jennings-shaffer@du.edu}

\address{Department of Mathematics and Statistics,  SUNY-Albany, 1400 Washington Avenue  Albany, NY 12222.}

\email{amilas@albany.edu}
%\begin{abstract}\end{abstract}

\title{On $q$-series identities for false theta series }
\thispagestyle{empty} \vspace{.5cm}
\maketitle

\begin{abstract} We prove several infinite families of $q$-series identities for false theta functions 
and related series. These identities are motivated by considerations of characters of modules of vertex 
operator superalgebras and of quantum dilogarithms. We also obtain 
closely related modular identities of the G\"ollnitz-Gordon-Andrews type. As a byproduct of our identities, we establish several identities for the Rogers dilogarithm function coming from multi $q$-hypergeometric series with ``double poles''.
\end{abstract}

%{\bf AM: general comment. Throughout the paper I'm using "half-integral and integral characteristic" (false) theta series. 
%Would you prefer even and odd characteristic instead (not so common)?
%CJS: I am fine with ``half-integral and integral characteristic''.}

\section{Introduction}
False theta functions are similar to theta functions except for an alteration of the signs in some of the series terms, which prevents them from being modular forms. They appear in a variety of contexts including
modular and quantum modular forms \cite{BM0, FOR, LZ}, meromorphic Jacobi forms \cite{BFOR}, quantum knot 
invariants \cite{BeirneOsburn1, GL,Hajij, HL,KO} (e.g. tails of colored Jones polynomials), 
Witten-Reshetikhin-Turaev invariants of Seifert 3-manifolds \cite{BHL, Hi1,Hi2}, and many more. More recently, they have appeared in the context of ``homological blocks'' of 3d-theories $T[M]$ with the gauge group $SU(2)$, where $M$ is a plumbed $3$-manifold \cite{GPPV} (see also \cite{BMM,CCFGH}). 
There are also higher-rank generalizations of false theta functions introduced and studied by K. Bringmann, T. Creutzig,
and the second author \cite{BM,CM} (see also \cite{BKM}).

%For additional aspects of partial and theta functions we refer the reader to an excellent review article by Warnaar \cite{Warnaar2}.

The literature on $q$-series identities connecting partial and false theta functions with $q$-hypergeometric series is extensive (see \cite{Warnaar2} and references therein). One of the most elegant identities in this context was given by 
Ramanujan  \cite[p. 18]{Berndt}\begin{equation} \label{Ramanujan}
 \sum_{n \in \mathbb{Z}} {\rm sgn}(n) q^{2n^2+n}=(q)_\infty \sum_{n \geq 0} \frac{q^{n^2+n}}{(q)_n^2},
\end{equation}
where ${\rm sgn}(n):=1$ for $n \geq 0$ and $-1$ otherwise (as usual, $(a)_n:=\prod_{i=1}^n (1-aq^{i-1})$).
The $q$-series on the left-hand side is an example of Rogers' false theta function.

Ramanujan's identity can be generalized to all positive integral characteristics.
\begin{theorem}[\cite{BM,Hajij,Warnaar4}] \label{ak} For $k \in \N$, we have 
\begin{equation} \label{MFI}
\frac{q^{-\frac{k^2}{4(k+1)}}}{(q)_\infty} {\sum_{n \in \mathbb{Z}} {\rm sgn}(n) q^{(k+1)(n+\frac{k}{2(k+1)})^2}}
=\sum_{n_1,n_2,\dotsc,n_{k} \geq 0} \frac{q^{N_1^2+N_2^2+\cdots+N_{k}^2+N_1+N_2+\cdots + N_{k}}}
	{(q)_{n_{k}}^2 (q)_{n_1} (q)_{n_2} \cdots (q)_{n_{k-1}}},
\end{equation}
where $N_i=\sum_{j \geq i} n_j$.
\end{theorem}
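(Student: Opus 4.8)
The plan is to deduce \eqref{MFI} from the Bailey chain. As a preliminary step I would simplify the left-hand side: completing the square gives $(k+1)\bigl(n+\tfrac{k}{2(k+1)}\bigr)^2=(k+1)n^2+kn+\tfrac{k^2}{4(k+1)}$, so the prefactor cancels and the left side of \eqref{MFI} equals $\tfrac{1}{(q)_\infty}\sum_{n\in\Z}\sgn(n)\,q^{(k+1)n^2+kn}$. Splitting this bilateral sum at $n=0$ and replacing $n$ by $-n-1$ in the terms with $n<0$, one gets
\[
\sum_{n\in\Z}\sgn(n)\,q^{(k+1)n^2+kn}
=\sum_{n\ge 0}\bigl(q^{(k+1)n^2+kn}-q^{(k+1)n^2+(k+2)n+1}\bigr)
=\sum_{n\ge 0}q^{(k+1)n^2+kn}\bigl(1-q^{2n+1}\bigr),
\]
which is the shape that will reappear on the other side of the computation.

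The key input is the Bailey pair relative to $a=q$ given by $\beta_n=1/(q)_n^2$ and $\alpha_n=(1-q^{2n+1})q^{n^2}/(1-q)$; that is, $\beta_n=\sum_{r=0}^n\frac{\alpha_r}{(q)_{n-r}(q^2)_{n+r}}$. Verifying this relation is the base case: since $(1-q)(q^2)_{n+r}=(q)_{n+r+1}$, after multiplying through by $(q)_{2n+1}$ it reduces to the finite identity $(1-q^{n+1})\binom{2n+1}{n}_q=\sum_{r=0}^n(1-q^{2r+1})q^{r^2}\binom{2n+1}{n-r}_q$, checked by standard $q$-binomial manipulations (this also appears in the literature as a known Bailey pair). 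I would then iterate Bailey's lemma $k$ times, in the form with the two free parameters sent to infinity: the $k$-fold iterate $(\alpha^{(k)},\beta^{(k)})$ satisfies $\alpha^{(k)}_n=q^{kn+kn^2}\alpha_n$ and
\[
\beta^{(k)}_N=\sum_{N\ge m_k\ge\cdots\ge m_1\ge 0}\frac{q^{m_1+\cdots+m_k+m_1^2+\cdots+m_k^2}}{(q)_{N-m_k}(q)_{m_k-m_{k-1}}\cdots(q)_{m_2-m_1}(q)_{m_1}^2}.
\]

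Letting $N\to\infty$ in the Bailey-pair relation for $(\alpha^{(k)},\beta^{(k)})$, the factor $(q)_{N-m_k}$ becomes $(q)_\infty$ and cancels the $(q)_\infty$ produced on the $\alpha$-side, leaving
\[
\sum_{m_k\ge\cdots\ge m_1\ge 0}\frac{q^{m_1+\cdots+m_k+m_1^2+\cdots+m_k^2}}{(q)_{m_k-m_{k-1}}\cdots(q)_{m_2-m_1}(q)_{m_1}^2}
=\frac{1}{(q^2)_\infty}\sum_{r\ge 0}q^{kr+kr^2}\alpha_r .
\]
Substituting $m_i=N_{k+1-i}$ (so the $m_i$ become the partial sums $N_1\ge\cdots\ge N_k$ with consecutive differences the $n_i$ of the theorem, and $(q)_{m_1}^2=(q)_{n_k}^2$) identifies the left side with the right-hand side of \eqref{MFI}; inserting $\alpha_r=(1-q^{2r+1})q^{r^2}/(1-q)$ and using $(1-q)(q^2)_\infty=(q)_\infty$ turns the right side into $\tfrac{1}{(q)_\infty}\sum_{r\ge 0}q^{(k+1)r^2+kr}(1-q^{2r+1})$, which by the first paragraph is $\tfrac{1}{(q)_\infty}\sum_{n\in\Z}\sgn(n)q^{(k+1)n^2+kn}$, the simplified left side of \eqref{MFI} (the case $k=1$ recovers Ramanujan's identity \eqref{Ramanujan}). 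I expect the only genuine obstacle to be the base case — exhibiting the Bailey pair with $\beta_n=1/(q)_n^2$ and proving the finite $q$-binomial identity above; after that the argument is routine Bailey-chain bookkeeping together with the elementary reindexing in the first paragraph. A direct induction on $k$ on the multisum looks less convenient, since peeling off a summation variable generates a cross term from completing a square that does not close up into the $(k-1)$-fold sum unless it is absorbed exactly as in Bailey's lemma.
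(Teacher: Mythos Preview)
Your argument is correct. The paper itself does not give a proof of Theorem~\ref{ak}; it cites the result from \cite{BM,Hajij,Warnaar4} and immediately remarks that it ``can be approached in several different ways''---via the Andrews--Gordon identities, via tails of colored Jones polynomials, or ``using Bailey's Lemma.'' Your proposal is precisely the third of these routes, carried out cleanly: the Bailey pair you invoke, $\alpha_n=(1-q^{2n+1})q^{n^2}/(1-q)$, $\beta_n=1/(q)_n^2$ relative to $a=q$, is exactly the $a=q$ specialization of the pair $(\alpha_n',\beta_n')$ the paper records in \eqref{Eq:GeneralBaileyPairLDef}, and the paper notes that this pair can be verified via a very-well-poised ${}_6\phi_5$ summation, so your ``only genuine obstacle'' is indeed dispatched in the literature. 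The subsequent $k$-fold iteration with both free parameters sent to infinity, the $N\to\infty$ limit, and the reindexing $m_i=N_{k+1-i}$ are all standard and match the Bailey-chain bookkeeping the paper uses later for its own results (e.g.\ Propositions~\ref{PropGeneralShifted1/2} and \ref{PropGeneralShifted1}). In short, your approach coincides with one the paper explicitly endorses, and with the machinery it relies on throughout.
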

This theorem can be approached in several different ways; it follows from 
the analytic form of the Andrews-Gordon identities combined with standard $q$-summation techniques \cite{BM0}, or using relations 
for the tail of colored Jones polynomials of $(2,2k)$ torus knots \cite{Hajij}, or using Bailey's Lemma \cite{Warnaar2, Warnaar4}. From the vertex algebra perspective, this particular form is interesting because the left-hand side in (\ref{MFI}) is the character of the $(1,k+1)$-singlet vertex algebra \cite{BM}; this explains a somewhat unusual shift in the exponent and the Euler factor appearing 
in the denominator (cf. (\ref{Ramanujan})). For additional aspects of partial theta and theta functions, we refer the reader to the 
recent review article by O. Warnaar \cite{Warnaar3} 
and references therein; see also \cite{FF,Warnaar1} for related identities in the context of conformal field theory.

More recently, in \cite{AM2,AM1, Sidoli}, a different type of false theta function was (re)discovered in connection with 
the $N=1$ super-singlet vertex superalgebras. Characters of irreducible modules of the super-singlet
are similar to the expression in (\ref{MFI}), but they contain a  ``fermionic'' product $(-q^{\frac12})_\infty$
and their characteristic is half-integral ($k \in \mathbb{N}$, $\epsilon \in \{0,\frac12\}$):
\begin{equation} \label{half-char}
 \frac{(-q^{\frac12+\epsilon})_\infty}{(q)_\infty} \sum_{n \in \mathbb{Z}} {\rm sgn}(n)q^{(k+\frac12)(n+a)^2},
\end{equation}
for some $a \in \mathbb{Q}$.
Regularized modular properties of these expressions have been thoroughly investigated in \cite{AMS,Sidoli}.

Characters of modules for the $N=1$ super-singlet algebra combine into characters of modules for certain ``logarithmic'' 
vertex algebras called $N=1$ super-triplets \cite{AM1}. Their irreducible characters take the following form (see \cite{AM2,AM1})
\begin{equation} \label{half-char-full}
\frac{(-q^{\frac12+\epsilon})_\infty}{(q)_\infty}  \sum_{n \in \mathbb{Z}} (2n+1) q^{(k+\frac12)(n+a)^2}.
\end{equation}
This series, up to a multiplicative $q$-power, is known to be modular. More precisely, the theta-like series in 
\eqref{half-char-full}
is a sum of modular forms of weight $\frac12$ and $\frac32$. 

Although the origin of false theta functions is not completely understood, their primary source is usually attributed to meromorphic Jacobi forms \cite[Chapter 11]{BFOR}. 
The simplest instance of this arises in the following well-known Fourier expansion \cite{A2, TanneryMolk1} ($|q|<|\zeta|^2<1$):
\begin{equation}\label{EqInverseJacobi}		
\frac{1}{\left(\zeta q^{\frac12},\zeta^{-1} q^{\frac12}\right)_\infty}
=
	\frac{1}{(q)^2_\infty}   
	\sum_{n_1} \sum_{n_2\geq |n_1|}  
	(-1)^{n_1+n_2} q^{\frac{ n_2(n_2+1)}{2}-\frac{n_1^2}{2}}\zeta^{n_1}
= 
	\sum_{\substack{n_1 \in \mathbb{Z} \\ n_2\geq 0}} \frac{q^{\frac{|n_1|}2+n_2}}{(q)_{n_2} (q)_{|n_1|+n_2}} \zeta^{n_1},
\end{equation}
where $(a_1,...,a_k)_m:=(a_1)_m \cdots (a_k)_m$.
As is plainly seen, extracting the constant term with respect to $\zeta$ yields 
another elegant expression for the Rogers false theta function (with $k=1$),
\begin{equation} \label{euler-p2}
\frac{1}{(q)^2_\infty} \sum_{n \in \mathbb{Z}} {\rm sgn}(n) q^{2n^2+n}=\sum_{n \geq 0} \frac{q^{n}}{(q)_n^2}.
\end{equation}
Compared to formula (\ref{Ramanujan}), this identity comes with an extra Euler factor and 
has no quadratic term in the exponent of the $q$-hypergeometric term.
%It is an open problem to find a family of meromorphic Jacobi forms that gives false theta functions with characteristics 
%$p>2$ as their constant terms. Ideally, the $q$-hypergeomtric side would have
	
The main purpose of this paper is to motivate and prove:
\begin{itemize}
\item[(I)] An analog of (\ref{MFI}) for certain $q$-series as in (\ref{half-char}). In particular, this would give 
a combinatorial form for irreducibile characters of $N=1$ super-singlet modules.
\item[(II)] A G\"ollnitz-Andrews-Gordon-type expression for the $q$-series in (\ref{half-char}). Related identities 
in a non-super context were studied in \cite{Warnaar1}.
\item[(III)] A family of $q$-series identities for false theta series coming from Fourier expansions
 of multi-variable Jacobi forms, in parallel to the $k=1$ case seen in (\ref{euler-p2}).
\end{itemize}
Let us briefly outline our solutions to these problems and the contents of the paper.

In Section 2, we gather several facts on Bailey pairs and Bailey chains used in the paper. 
These powerful methods are applied in Section 3 to prove identities for ``shifted'' 
false theta functions. In  Proposition \ref{PropGeneralShifted1/2}, we present identities for $\epsilon=0$ whereas Proposition \ref{PropGeneralShifted1} pertains to $\epsilon=\frac12$.
As a special case, we obtain $q$-hypergeometric expressions for (\ref{half-char}).

In Section 4, we first use Lemma \ref{shift-help}, to write (\ref{half-char-full}) as an infinite sum of shifted false theta series. 
Then we combine this with identities from Section 3, and obtain a single multi-hypergeometric series with a parity condition. 
Our main results here are Theorems  \ref{NS-char-log}, \ref{log-Ramond},
and \ref{log-Ramond2}, with Theorem \ref{NS-char-log} 
corresponding to $\epsilon=0$ and 
Theorems \ref{log-Ramond} and \ref{log-Ramond2}
corresponding to $\epsilon=\frac12$.

In Section 5, we switch our focus to $q$-series identities generalizing (\ref{euler-p2}); see Theorem \ref{euler-false-p}. 
We first express the relevant $q$-hypergometric series in a form convenient for use of Bailey chains from Section 2
(see Lemma \ref{LemmaMultiSum1}).
Interestingly, this methods allows us to prove more than we hoped for; we also obtain a family of modular identities 
in Theorem \ref{euler-modular-p} that intertwine with the ``false'' identities in Theorem \ref{euler-false-p}.
We should point out that Theorem \ref{euler-modular-p} was recently conjectured in \cite{CS} in connection to Argyres-Douglas theories in physics.
%In a way our identities can be viewed as logarithmic versions of 

%Directly from the lemma, after taking ${\rm CT}_{\zeta}$, we get another formula for the Rogers false theta function but an extra
%Euler factor $(q;q)_\infty$:

In Section 6, using well-established methods \cite{Kirillov,VZ, Zagier}, we 
prove several identities for the Rogers dilogarithm function (see Propositions  
\ref{dilog-log}, \ref{PropositionMoreDilogarithms1}, and \ref{PropositionMoreDilogarithms2}). 
We finish in Section 7 with a few pointers for future research.

{\bf Acknowledgments.} We thank C. Nazaroglu for pointing out \cite{CS}. We also thank O. Warnaar, J. Lovejoy,  R. Osburn and K. Bringmann for 
informative discussions and suggestions. Lastly, we thank the anonymous referees
for their various corrections and improvements to an earlier version of
this manuscript.

%The second author acknowledges partial support from the NSF-DMS 1601070 grant and a stipend from 
%Max Planck Institute for Mathematics, Bonn.

\section{Bailey pairs, the Bailey chain, and the Bailey Lattice}

We recall a pair of sequences $(\alpha_n,\beta_n)$ is called a Bailey pair
relative to $(a,q)$ if
\begin{gather*}
\beta_n = \sum_{j=0}^n\frac{\alpha_j}{(q)_{n-j}(aq)_{n+j}}.
\end{gather*}
Bailey's lemma is an identity relating rather general $q$-series involving
Bailey pairs. It was introduced by W. N. Bailey \cite{Bailey1} to give a unified
framework to reprove the Rogers-Ramanujan identities as well as several related
identities of Rogers. The ``Bailey pair machinery'' refers to a combination of
Bailey pairs, Bailey's lemma, and various generalizations that were largely
made famous by G. Andrews. From the Bailey pair machinery,
our proofs require the so-called Bailey chain and Bailey lattice,
rather than just Bailey's lemma.

The full version of the $k$-fold iteration of Bailey's lemma can be found
as Theorem 3.4 in \cite{Andrews1}. We let the parameters 
$N\rightarrow\infty$, $b_1=b_2=\dotsb=b_k\rightarrow\infty$,
$c_1=c_2=\dotsb=c_k=q$, and specialize to Bailey pairs with $a=q$. By doing so
we have that 
\begin{align}\label{EqBaileyChainSpecialized}
&\sum_{n_k\ge n_{k-1}\ge \dotsb \ge n_1\ge0}
\frac{(q)_{n_1} (-1)^{n_1+n_2+\dotsb+n_k} q^{\frac{n_1(n_1+1)}{2}+\frac{n_2(n_2+1)}{2}+\dotsb+\frac{n_k(n_k+1)}{2}} \beta_{n_1}  }
	{(q)_{n_k-n_{k-1}}(q)_{n_{k-1}-n_{k-2}}\dotsm (q)_{n_2-n_1}  }
\nonumber\\
&=
(1-q)\sum_{n\ge0} (-1)^{nk} q^{\frac{kn(n+1)}{2}} \alpha_n
,
\end{align}
for Bailey pairs $(\alpha_n,\beta_n)$ relative to $(q,q)$.

The statement for the Bailey lattice is given in \cite{AgarwalAndrewsBressoud1}
as Theorem 3.1. We let $n\rightarrow\infty$, 
$\rho_1=\rho$, $\rho_j\rightarrow\infty$ for $2\le j\le k$,
$\sigma_j\rightarrow\infty$ for $1\le j\le k$. The identity then
becomes
\begin{align*}
&\sum_{m_1\ge m_2\ge \dotsb \ge m_k\ge 0}
\frac{ (\rho)_{m_1} (-1)^{m_1} \rho^{-m_1} a^{m_1+m_2+\dotsb+m_k} 
	q^{\frac{m_1(m_1-1)}{2}+m_2^2+m_3^2+\dotsb+m_k^2 -m_2-m_3-\dotsb-m_i } \beta_{m_k}   }
{(q)_{m_1-m_2}(q)_{m_2-m_3}\dotsb (q)_{m_{k-1}-m_k}}
\\
&=
	\frac{(a\rho^{-1})_\infty}{(aq)_\infty}
	\sum_{n\ge0} \frac{(\rho)_n (-1)^n \rho^{-n} a^{kn} q^{(k-\frac12)n^2 + (\frac12-i)n  } \alpha_n }
		{(a\rho^{-1})_n(1-aq^{2n})}
	\nonumber\\&\quad
	-
	\frac{(a\rho^{-1})_\infty}{(aq)_\infty}
	\sum_{n\ge1} \frac{(\rho)_n (-1)^n \rho^{-n} a^{kn+i-k+1} q^{(k-\frac12)n^2 + (\frac52+i-2k)n + k-i-2  } \alpha_{n-1} }
		{(a\rho^{-1})_n(1-aq^{2n-2})}
,
\end{align*}
where $0\le i\le k$ and $(\alpha_n,\beta_n)$ is Bailey pair relative
to $(a,q)$.
We further modify this identity by reindexing with 
$m_j=N_j=n_j+n_{j+1}+\dotsb n_k$, which gives
\begin{align}\label{EqBaileyLemmaKIFoldLimit}
&\sum_{n_1,n_2,\dotsc,n_k\ge0}
\frac{ (\rho)_{N_1}(-1)^{N_1}\rho^{-N_1} a^{N_1+N_2+\dotsb+N_k} 
	q^{ \frac{N_1(N_1-1)}{2} +N_2^2+N_3^2+\dots+N_k^2 -N_2-N_3-\dotsb-N_i } \beta_{n_k}  }
{(q)_{n_1}(q)_{n_2}\dotsm (q)_{n_{k-1}}}
\nonumber\\
&=
	\frac{(a\rho^{-1})_\infty}{(aq)_\infty}
	\sum_{n\ge0} \frac{(\rho)_n (-1)^n \rho^{-n} a^{kn} q^{(k-\frac12)n^2 + (\frac12-i)n  } \alpha_n }
		{(a\rho^{-1})_n(1-aq^{2n})}
	\nonumber\\&\quad
	-
	\frac{(a\rho^{-1})_\infty}{(aq)_\infty}
	\sum_{n\ge1} \frac{(\rho)_n (-1)^n \rho^{-n} a^{kn+i-k+1} q^{(k-\frac12)n^2 + (\frac52+i-2k)n + k-i-2  } \alpha_{n-1} }
		{(a\rho^{-1})_n(1-aq^{2n-2})}
.
\end{align}
We note that while it may appear, at first glance, that the right-hand side of \eqref{EqBaileyLemmaKIFoldLimit}
should follow a different pattern when $i=0$, one can verify with
term by term rearrangements that the above is correct.

We require two Bailey pairs. One is the Bailey pair $B(3)$ of Slater \cite{Slater1},
which is relative to $(q,q)$ and defined by
\begin{gather}\label{EqDefBaileyPairB3}
\alpha^{B3}_n := \frac{(-1)^n q^{\frac{n(3n+1)}{2} }(1-q^{2n+1})}{(1-q)}
,\qquad\qquad\qquad
\beta^{B3}_n := \frac{1}{(q)_n}.
\end{gather}
The other Bailey pair we require, which is relative to $(a,q)$, is given by
\begin{gather}\label{Eq:GeneralBaileyPairLDef}
\alpha^\prime_n := \frac{a^n q^{n(n-1)}(1-aq^{2n})}{(1-a)}
,\qquad\qquad
\beta^\prime_n := \frac{1}{(q,a)_n}.
\end{gather}
That $(\alpha_n^\prime,\beta_n^{\prime})$ forms a Bailey pair can be verified
in a variety of ways. One method is to take the defining relation of a Bailey
pair and simplify the sum on $\alpha_j$ with a summation formula for a 
very well poised $_6\phi_5$ \cite[(II.21)]{GasperRahman}.

\section{False Theta Function Identities}

We start with a family of $q$-series identity for ``shifted'' false theta functions. These are similar to Rogers' false 
theta functions, but some of the terms are missing. The first identity is related to the 
$\epsilon=0$ case discussed in the introduction.

\begin{proposition}\label{PropGeneralShifted1/2}
Suppose $i,k,\ell\in\mathbb{Z}$, $k\ge1$, and $0\le i\le k$. Then 
\begin{align*}
&
\frac{ (-q^{\frac12})_\infty}{(q)_\infty}\left( \sum_{n\ge |\ell|} - \sum_{n\le -|\ell|-1} \right)
	q^{ \left(k+\frac12\right)\left(n+\frac{i}{2k+1}\right)^2
		- \frac{\left(2k\ell+i+\ell \right)^2}{4k+2}	}
\\
&=
	\sum_{n_1,n_2,\dotsc,n_k\ge0}
	\frac{ 
		q^{\frac{N_1(N_1+1)}{2}+N_2^2+N_3^2+\dotsb+N_k^2 + (\ell+\frac12)N_1 
			+ (2\ell+1)(N_2+N_3+\dotsb+N_k) - N_{1}-N_{2}-\dotsb-N_{k-i} }
		(-q^{\frac12})_{N_1+\ell}			
	}{(q)_{n_1}(q)_{n_2}\dotsm(q)_{n_k}(q)_{n_k+2\ell}}
.
\end{align*}
\end{proposition}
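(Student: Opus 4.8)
The plan is to read the identity off the $k$-fold Bailey lattice \eqref{EqBaileyLemmaKIFoldLimit} applied to the Bailey pair $(\alpha'_n,\beta'_n)$ of \eqref{Eq:GeneralBaileyPairLDef}. Assume first that $\ell\ge 0$. In \eqref{EqBaileyLemmaKIFoldLimit} I take $(\alpha_n,\beta_n)=(\alpha'_n,\beta'_n)$ relative to $(a,q)$ with the choices $a=q^{2\ell+1}$ and $\rho=-q^{\ell+\frac12}$, and I rename the lattice parameter there as $k-i$ (so that the range $0\le i\le k-1$ of the Proposition corresponds to $1\le k-i\le k$; the value $i=k$ is treated separately at the end). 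The computation is driven by the coincidence $a\rho^{-1}=-q^{\ell+\frac12}=\rho$, which makes every quotient $\tfrac{(\rho)_n}{(a\rho^{-1})_n}$ on the right of \eqref{EqBaileyLemmaKIFoldLimit} equal to $1$ and $\tfrac{(a\rho^{-1})_\infty}{(aq)_\infty}$ equal to $\tfrac{(-q^{\frac12+\ell})_\infty}{(q^{2\ell+2})_\infty}$.

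For the right-hand side of \eqref{EqBaileyLemmaKIFoldLimit}: after that cancellation, insert $\alpha'_n=\tfrac{a^nq^{n(n-1)}(1-aq^{2n})}{1-a}$, so that $\tfrac{\alpha'_n}{1-aq^{2n}}$ and $\tfrac{\alpha'_{n-1}}{1-aq^{2n-2}}$ collapse to single powers of $q$; together with $(-1)^n\rho^{-n}=q^{-(\ell+\frac12)n}$, the two $n$-sums become sums of $q$ raised to a quadratic in $n$ with leading coefficient $k+\tfrac12$. Completing the square and shifting $n\mapsto n+\ell$ in the first sum, and reindexing $n\mapsto -n-\ell$ in the second, yields $q^{-(k+\frac12)c^2}\sum_{m\ge\ell}q^{(k+\frac12)(m+\frac{i}{2k+1})^2}$ and $q^{-(k+\frac12)c^2}\sum_{m\le-\ell-1}q^{(k+\frac12)(m+\frac{i}{2k+1})^2}$ respectively, where $c=\ell+\tfrac{i}{2k+1}$; the essential check here is that the two completions of the square leave the \emph{same} additive constant $-(k+\tfrac12)c^2=-\tfrac{(2k\ell+i+\ell)^2}{4k+2}$. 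Using $(-q^{\frac12+\ell})_\infty=\tfrac{(-q^{\frac12})_\infty}{(-q^{\frac12})_\ell}$ and $(1-q^{2\ell+1})(q^{2\ell+2})_\infty=\tfrac{(q)_\infty}{(q)_{2\ell}}$, the right-hand side of \eqref{EqBaileyLemmaKIFoldLimit} becomes $\tfrac{(q)_{2\ell}}{(-q^{\frac12})_\ell}$ times the left-hand side of the Proposition (with $\ell=|\ell|$ as $\ell\ge0$).

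For the left-hand side of \eqref{EqBaileyLemmaKIFoldLimit}: insert $\beta'_{n_k}=\tfrac{1}{(q,a)_{n_k}}=\tfrac{(q)_{2\ell}}{(q)_{n_k}(q)_{n_k+2\ell}}$, use $(\rho)_{N_1}(-1)^{N_1}\rho^{-N_1}=\prod_{j=0}^{N_1-1}\bigl(q^{j}+q^{-\ell-\frac12}\bigr)=\tfrac{q^{-(\ell+\frac12)N_1}(-q^{\frac12})_{N_1+\ell}}{(-q^{\frac12})_\ell}$, and $a^{N_1+\cdots+N_k}=q^{(2\ell+1)(N_1+\cdots+N_k)}$. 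Pulling out $\tfrac{(q)_{2\ell}}{(-q^{\frac12})_\ell}$ and collecting the powers of $q$, the $N_1$-contribution $\tfrac{N_1(N_1-1)}{2}-(\ell+\tfrac12)N_1+(2\ell+1)N_1$ rearranges to $\tfrac{N_1(N_1+1)}{2}+(\ell+\tfrac12)N_1-N_1$, while for $j\ge2$ the $N_j$-contribution is $N_j^2+(2\ell+1)N_j$, diminished by $N_j$ exactly when $j\le k-i$---precisely the exponent appearing in the Proposition. Hence the left-hand side of \eqref{EqBaileyLemmaKIFoldLimit} equals $\tfrac{(q)_{2\ell}}{(-q^{\frac12})_\ell}$ times the multisum on the right of the Proposition, and cancelling the common factor gives the Proposition for $\ell\ge0$ and $0\le i\le k-1$.

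The step I expect to require the most care is the exponent bookkeeping---confirming that the two completions of the square produce the identical constant $\tfrac{(2k\ell+i+\ell)^2}{4k+2}$ and that the $\rho$- and $a$-powers convert $\tfrac{N_1(N_1-1)}{2}$ into the $\tfrac{N_1(N_1+1)}{2}+(\ell+\tfrac12)N_1-N_1$ of the Proposition---together with the two boundary situations. The case $i=k$, where the Proposition carries no subtracted sum $N_1+\cdots+N_{k-i}$, must be handled separately: the relevant specialization sits at the extreme value of the Bailey-lattice parameter, for which one should use the term-by-term rearrangement of \eqref{EqBaileyLemmaKIFoldLimit} remarked on in the text rather than reading off its pattern naively. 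For $\ell<0$ the factor $(q)_{n_k+2\ell}$ forces $n_k\ge-2\ell$; substituting $n_k\mapsto n_k+2|\ell|$ turns $\beta'$ into the Bailey pair relative to $(q^{2|\ell|+1},q)$, shifts every $N_j$ by $2|\ell|$, and introduces one further power of $q$, after which the same computation yields the Proposition, the extra power being exactly what transports the $\ell$-dependent constant to its $|\ell|$-dependent form.
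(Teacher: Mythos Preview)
Your approach is essentially identical to the paper's: both apply the Bailey lattice \eqref{EqBaileyLemmaKIFoldLimit} to the pair \eqref{Eq:GeneralBaileyPairLDef} with $a=q^{2\ell+1}$ and $\rho=-q^{\ell+\frac12}$, perform the relabeling $i\leftrightarrow k-i$, clear the factor $\tfrac{(-q^{\frac12})_\ell}{(q)_{2\ell}}$, and reduce $\ell<0$ to $\ell\ge0$ by the shift $n_k\mapsto n_k+2|\ell|$. Two small remarks: your worry that $i=k$ ``must be handled separately'' is overstated---the paper simply invokes its remark that \eqref{EqBaileyLemmaKIFoldLimit} is already valid at the lattice endpoint $i=0$, so no extra argument is needed; and your phrase that the shift ``turns $\beta'$ into the Bailey pair relative to $(q^{2|\ell|+1},q)$'' is not quite the right description---the reduction is a direct comparison of the shifted multisum with the $|\ell|$-case (picking up the factor $q^{-2i\ell}$), not a change of Bailey pair.
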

\begin{proof}
First we verify that the identity for negative $\ell$ follows from the 
identity for positive $\ell$. For this, assume the identity holds for 
positive values of $\ell$. When $\ell$ is negative, $(q)_{n_k+2\ell}^{-1}=0$ for $n_k<-2\ell$, and so
\begin{align*}
&\sum_{n_1,n_2,\dotsc,n_k\ge 0}
	\frac{
		q^{ \frac{N_1(N_1+1)}{2}+N_2^2+N_3^2+\dotsb+N_k^2
			+ (\ell+\frac12) N_1 + (2\ell+1)(N_2+N_3+\dotsb+N_k)
			-N_1 - N_2 - \dotsb - N_{k-i}	}
		(-q^{\frac12})_{N_1+\ell} 		
	}{(q)_{n_1} (q)_{n_2} \dotsm (q)_{n_k} (q)_{n_k+2\ell} }
\\
&=
\sum_{n_1,n_2,\dotsc,n_k\ge 0}
	\frac{
		q^{ \frac{(N_1-2\ell)(N_1+1-2\ell)}{2}+(N_2-2\ell)^2+(N_3-2\ell)^2+\dotsb+(N_k-2\ell)^2}
		(-q^{\frac12})_{N_1-\ell} 		
	}{(q)_{n_1} (q)_{n_2} \dotsm (q)_{n_k} (q)_{n_k-2\ell} }
	\\&\qquad\qquad\qquad\times 			
	q^{ (\ell+\frac12) (N_1-2\ell) + (2\ell+1)(N_2+N_3+\dotsb+N_k-(k-1)2\ell)
			-N_1 - N_2 - \dotsb - N_{k-i} + (k-i)2\ell	}
\\
&=
\sum_{n_1,n_2,\dotsc,n_k\ge 0}
	\frac{
		q^{ \frac{N_1(N_1+1)}{2}+N_2^2+N_3^2+\dotsb+N_k^2
			+ (-\ell+\frac12)N_1 + (-2\ell+1)(N_2+N_3+\dotsb+N_k)
			-N_1 - N_2 - \dotsb - N_{k-i} -2i\ell	}
	}{(q)_{n_1} (q)_{n_2} \dotsm (q)_{n_k} (q)_{n_k-2\ell} }
	\\&\qquad\qquad\qquad\times 			
	(-q^{\frac12})_{N_1-\ell} 
\\
&=
\frac{ (-q^{\frac12})_\infty}{(q)_\infty}\left( \sum_{n\ge -\ell} - \sum_{n\le \ell-1} \right)
	q^{ \left(k+\frac12\right)\left(n+\frac{i}{2k+1}\right)^2
		- \frac{\left(-2k\ell+i-\ell \right)^2}{4k+2} - 2i\ell	}
\\
&=
\frac{ (-q^{\frac12})_\infty}{(q)_\infty}\left( \sum_{n\ge |\ell|} - \sum_{n\le -|\ell|-1} \right)
	q^{ \left(k+\frac12\right)\left(n+\frac{i}{2k+1}\right)^2
		- \frac{\left(2k\ell+i+\ell \right)^2}{4k+2} }
.
\end{align*}
As such, we need only consider the case where $\ell\ge0$.

We prove this identity after the change of variable $i\mapsto k-i$.
We apply \eqref{EqBaileyLemmaKIFoldLimit}
with $\rho\mapsto -q^{\ell+\frac12}$  to the Bailey pair
in \eqref{Eq:GeneralBaileyPairLDef} with $a=q^{2\ell+1}$. Doing so yields
\begin{align*}
&
	\sum_{n_1,n_2,\dotsc,n_k\ge0}
	\frac{
		\left(-q^{\ell+\frac12}\right)_{N_1} 		
		q^{ -(\ell+\frac12)N_1 + (2\ell+1)(N_1+N_2+\dotsb+N_k)
			+ \frac{N_1(N_1-1)}{2}+N_2^2+N_3^2+\dotsb+N_k^2
			- N_2 - N_3 - \dotsb - N_i	}
	}{(q)_{n_1} (q)_{n_2} \dotsm (q)_{n_k} \left(q^{2\ell+1}\right)_{n_k} }
\\
&=
	\frac{(-q^{\ell+\frac12})_\infty}{(q^{2\ell+2})_\infty}
	\sum_{n=0}^\infty
	\frac{
		q^{-(\ell+\frac12)n + (2\ell+1)kn + (k-\frac12)n^2 + (\frac{1}{2}-i)n + n^2 + 2\ell n}
	}{(1-q^{2\ell+1})}
	\\&\quad
	-
	\frac{(-q^{\ell+\frac12})_\infty}{(q^{2\ell+2})_\infty}
	\sum_{n=1}^\infty
	\frac{
		q^{-(\ell+\frac12)n + (2\ell+1)(kn+i-k+1) + (k-\frac12)n^2 + (\frac{5}{2}+i-2k)n + k-i-2+(n-1)^2 + 2\ell (n-1)}
	}{(1-q^{2\ell+1})}
\\
&=
	\frac{(-q^{\ell+\frac12})_\infty}{(q^{2\ell+1})_\infty}
	\sum_{n=0}^\infty
		q^{(k+\frac12)n^2 + (2k\ell+k-i+\ell)n}
	-
	\frac{(-q^{\ell+\frac12})_\infty}{(q^{2\ell+1})_\infty}
	\sum_{n=1}^\infty
		q^{(k+\frac12)n^2 + (2k\ell-k+i+\ell)n + 2\ell(i-k)} 
\\
&=
	\frac{(-q^{\ell+\frac12})_\infty}{(q^{2\ell+1})_\infty}
	\left( \sum_{n\ge\ell} - \sum_{n\le -\ell-1} \right)
	q^{ \left(k+\frac12\right)\left(n +\frac{k-i}{2k+1}\right)^2 
		- \frac{(2k\ell+k-i+\ell)^2}{4k+2}  }
.
\end{align*} 
The claimed identity then follows by multiplying the far extremes of the
above identity by $\frac{(-q^{\frac12})_\ell}{(q)_{2\ell}}$.
\end{proof}

\begin{remark} Proposition \ref{PropGeneralShifted1/2}
gives $q$-hypergeometric expressions for characters for irreducible modules of the $N=1$ super-singlet vertex algebra
(see \eqref{half-char}).
\end{remark}

Next we have a similar identity related to the $\epsilon=\frac12$ case of the series
discussed in the introduction.
\begin{proposition}\label{PropGeneralShifted1}
Suppose $i,k,\ell\in\mathbb{Z}$, $k\ge1$, and $0\le i\le k$. Then 
\begin{align*}
&
\frac{ (-q)_\infty}{(q)_\infty}\left( \sum_{n\ge |\ell|} - \sum_{n\le -|\ell|-1} \right)
	q^{ \left(k+\frac12\right)\left(n+\frac{2i-1}{4k+2}\right)^2
		- \frac{\left(2k\ell+i+\ell-\frac12 \right)^2}{4k+2}	}
	\left(1+q^n\right)
\\
&=
	\sum_{n_1,n_2,\dotsc,n_k\ge0}
	\frac{ 
		q^{\frac{N_1(N_1+1)}{2}+N_2^2+N_3^2+\dotsb+N_k^2 + \ell N_1 
			+ (2\ell+1)(N_2+N_3+\dotsb+N_k) - N_{1}-N_{2}-\dotsb-N_{k-i} }
		(-q)_{N_1+\ell}			
	}{(q)_{n_1}(q)_{n_2}\dotsm(q)_{n_k}(q)_{n_k+2\ell}}
.
\end{align*}
Furthermore, the $i=k$ case simplifies as  
\begin{align*}
&
\frac{ (-q)_\infty}{(q)_\infty}\left( \sum_{n\ge |\ell|} - \sum_{n\le -|\ell|-1} \right)
	q^{ \left(k+\frac12\right)\left(n+\frac{2k-1}{4k+2}\right)^2
		- \frac{\left(2k\ell+k+\ell-\frac12 \right)^2}{4k+2}	}
\\
&=
	\sum_{n_1,n_2,\dotsc,n_k\ge0}
	\frac{ q^{\frac{N_1(N_1+1)}{2}+N_2^2+N_3^2+\dotsb+N_k^2 +\ell N_1 + (2\ell+1)(N_2+N_3+\dotsb+N_k)  } 
		(-q)_{N_1+\ell}}
	{(q)_{n_1}(q)_{n_2}\dotsm(q)_{n_k}(q)_{n_k+2\ell}}
\end{align*}
\end{proposition}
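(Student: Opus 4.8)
The plan is to mirror the proof of Proposition \ref{PropGeneralShifted1/2} essentially verbatim, replacing the parameter $\rho\mapsto -q^{\ell+\frac12}$ by $\rho\mapsto -q^{\ell+1}$ throughout. First, exactly as in the previous proof, I would reduce to the case $\ell\ge0$: when $\ell<0$ the factor $(q)_{n_k+2\ell}^{-1}$ vanishes for $n_k<-2\ell$, so one shifts the summation index $n_k\mapsto n_k-2\ell$ (equivalently $N_j\mapsto N_j-2\ell$ for all $j$), collects the resulting $q$-powers, and checks that the exponent and the shifted $(-q)$-Pochhammer reorganize into the $\ell\mapsto-\ell$ instance of the right-hand side, which on the theta side corresponds to the symmetry $\bigl(\sum_{n\ge|\ell|}-\sum_{n\le-|\ell|-1}\bigr)$ together with the completed square $(k+\tfrac12)(n+\tfrac{2i-1}{4k+2})^2-\tfrac{(2k\ell+i+\ell-1/2)^2}{4k+2}$. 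The bookkeeping here is identical in structure to the $\epsilon=0$ case, with $\tfrac12\mapsto0$ in the relevant linear term of $N_1$.

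For the main case $\ell\ge0$, I would perform the change of variable $i\mapsto k-i$ and apply \eqref{EqBaileyLemmaKIFoldLimit} with $\rho\mapsto -q^{\ell+1}$ to the Bailey pair $(\alpha'_n,\beta'_n)$ of \eqref{Eq:GeneralBaileyPairLDef} relative to $(a,q)$ with $a=q^{2\ell+1}$. The left-hand side of \eqref{EqBaileyLemmaKIFoldLimit} becomes, after substituting $\beta'_{n_k}=1/(q,q^{2\ell+1})_{n_k}$ and simplifying, a multi-sum whose $q$-exponent is $\frac{N_1(N_1-1)}{2}+N_2^2+\dotsb+N_k^2+(\ell+\tfrac12)N_1+(2\ell+1)(N_2+\dotsb+N_k)-N_2-\dotsb-N_i$ — note that with $\rho=-q^{\ell+1}$ the $(\rho)_{N_1}(-1)^{N_1}\rho^{-N_1}$ factor contributes $(-q^{\ell+1})_{N_1}q^{-(\ell+1)N_1}$, and absorbing half of the quadratic correction yields the clean $\ell N_1$ coefficient (rather than $(\ell+\tfrac12)N_1$) that appears in the claimed right-hand side; this is precisely the source of the $(-q)_\infty$ instead of $(-q^{1/2})_\infty$. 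The right-hand side of \eqref{EqBaileyLemmaKIFoldLimit} is, as before, a difference of two geometric-type sums in $n$; substituting $\alpha'_n$ and $\alpha'_{n-1}$ and simplifying each exponent, the $q^{2n}$ and $q^{2(n-1)}$ cross terms combine with the $\rho$-power to produce exponents of the form $(k+\tfrac12)n^2+(\cdots)n$, so that the two sums reassemble, via completing the square, into $\frac{(-q^{\ell+1})_\infty}{(q^{2\ell+1})_\infty}\bigl(\sum_{n\ge\ell}-\sum_{n\le-\ell-1}\bigr)q^{(k+\frac12)(n+\frac{k-i}{2k+1})^2-\cdots}$, now multiplied by an extra $(1+q^n)$-type factor coming from the $\rho$-dependent prefactors $(\rho)_n\rho^{-n}$ — this is the new feature relative to Proposition \ref{PropGeneralShifted1/2} and accounts for the $(1+q^n)$ on the left of the present statement. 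Finally, multiplying both extremes by $\frac{(-q)_\ell}{(q)_{2\ell}}$ and reverting $i\mapsto k-i$ gives the first displayed identity.

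For the $i=k$ simplification, I would observe that setting $i=k$ removes the entire linear correction $-N_1-N_2-\dotsb-N_{k-i}$ (the range $1$ through $k-i=0$ is empty), so the right-hand side is literally the $i=k$ specialization of the general formula with that term deleted. On the theta side, $i=k$ gives the shift $\frac{2k-1}{4k+2}$ and one checks directly that $(k+\tfrac12)(n+\tfrac{2k-1}{4k+2})^2$ together with the $(1+q^n)$ factor telescopes: the term $q^{(k+\frac12)(n+\frac{2k-1}{4k+2})^2}\cdot q^n$ equals $q^{(k+\frac12)(n+1+\frac{2\cdot\frac12-1}{4k+2}-\text{correction})^2}$ up to the constant, i.e.\ the $(1+q^n)$ collapses the sum to a single false theta series with no $(1+q^n)$, exactly as the bare statement claims; the constant $-\frac{(2k\ell+k+\ell-1/2)^2}{4k+2}$ is the residual after completing this square.

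The main obstacle I anticipate is purely computational rather than conceptual: tracking the exponent of $q$ through the $\rho\mapsto-q^{\ell+1}$ substitution in \eqref{EqBaileyLemmaKIFoldLimit}, in particular correctly splitting the $q^{-(\ell+1)N_1}$ contribution so that half cancels against the $\frac{N_1(N_1-1)}{2}$ to give $\frac{N_1(N_1+1)}{2}+\ell N_1$ and verifying that the leftover half-integer powers recombine across the two $n$-sums on the right into the single $(1+q^n)$ factor rather than spoiling the square-completion. Everything else — the $\ell<0$ reduction, the final renormalization by $\frac{(-q)_\ell}{(q)_{2\ell}}$, and the $i=k$ collapse — is routine and parallels the $\epsilon=0$ proof step for step.
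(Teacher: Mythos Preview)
Your approach is correct and essentially identical to the paper's: reduce to $\ell\ge0$ by the shift $n_k\mapsto n_k-2\ell$, then apply \eqref{EqBaileyLemmaKIFoldLimit} with $\rho\mapsto-q^{\ell+1}$ and $a=q^{2\ell+1}$ to the Bailey pair \eqref{Eq:GeneralBaileyPairLDef}, and finally renormalize by $\tfrac{(-q)_\ell}{(q)_{2\ell}}$.

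Two points in your sketch are inaccurate, though neither derails the argument. First, the factor $(1+q^n)$ does not come from $(\rho)_n\rho^{-n}$ alone; it arises from the ratio $(\rho)_n/(a\rho^{-1})_n=(-q^{\ell+1})_n/(-q^{\ell})_n=(1+q^{n+\ell})/(1+q^{\ell})$, which was invisible in Proposition~\ref{PropGeneralShifted1/2} because there $a\rho^{-1}=\rho$. Second, your account of the $i=k$ simplification is wrong: nothing telescopes. What actually happens is that the $q^n$ contribution has exponent $(k+\tfrac12)(n+\tfrac12)^2+\text{const}$, and since $(n+\tfrac12)^2$ is invariant under $n\mapsto-n-1$ while this map exchanges the ranges $n\ge|\ell|$ and $n\le-|\ell|-1$, the entire $q^n$ piece of $\bigl(\sum_{n\ge|\ell|}-\sum_{n\le-|\ell|-1}\bigr)$ vanishes identically, leaving only the ``$1$'' part of $(1+q^n)$. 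The paper carries this out explicitly (in the $i\mapsto k-i$ coordinates, so at $i=0$) by shifting both sums to start at $n=0$ and observing they coincide term by term.
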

\begin{proof}
As with the proof of Proposition \ref{PropGeneralShifted1/2}, we 
first verify that the identity for negative $\ell$ follows from the 
identity for positive $\ell$. Assuming that the identity holds for positive
values of $\ell$, we then have for negative values of $\ell$ that
\begin{align*}
&\sum_{n_1,n_2,\dotsc,n_k\ge 0}
	\frac{
		q^{ \frac{N_1(N_1+1)}{2}+N_2^2+N_3^2+\dotsb+N_k^2
			+ \ell N_1 + (2\ell+1)(N_2+N_3+\dotsb+N_k)
			-N_1 - N_2 - \dotsb - N_{k-i}	}
		(-q)_{N_1+\ell} 		
	}{(q)_{n_1} (q)_{n_2} \dotsm (q)_{n_k} (q)_{n_k+2\ell} }
\\
&=
\sum_{n_1,n_2,\dotsc,n_k\ge 0}
	\frac{
		q^{ \frac{(N_1-2\ell)(N_1+1-2\ell)}{2}+(N_2-2\ell)^2+(N_3-2\ell)^2+\dotsb+(N_k-2\ell)^2}
		(-q)_{N_1-\ell} 		
	}{(q)_{n_1} (q)_{n_2} \dotsm (q)_{n_k} (q)_{n_k-2\ell} }
	\\&\qquad\qquad\qquad\times 			
	q^{ \ell(N_1-2\ell) + (2\ell+1)(N_2+N_3+\dotsb+N_k-(k-1)2\ell)
			-N_1 - N_2 - \dotsb - N_{k-i} + (k-i)2\ell	}
\\
&=
\sum_{n_1,n_2,\dotsc,n_k\ge 0}
	\frac{
		q^{ \frac{N_1(N_1+1)}{2}+N_2^2+N_3^2+\dotsb+N_k^2
			-\ell N_1 + (-2\ell+1)(N_2+N_3+\dotsb+N_k)
			-N_1 - N_2 - \dotsb - N_{k-i} + \ell -2i\ell	}
		(-q)_{N_1-\ell} 		
	}{(q)_{n_1} (q)_{n_2} \dotsm (q)_{n_k} (q)_{n_k-2\ell} }
\\
&=
\frac{ (-q)_\infty}{(q)_\infty}\left( \sum_{n\ge -\ell} - \sum_{n\le \ell-1} \right)
	q^{ \left(k+\frac12\right)\left(n+\frac{2i-1}{4k+2}\right)^2
		- \frac{\left(-2k\ell+i-\ell-\frac12 \right)^2}{4k+2} + \ell - 2i\ell	}
	(1+q^n)
\\
&=
\frac{ (-q)_\infty}{(q)_\infty}\left( \sum_{n\ge |\ell|} - \sum_{n\le -|\ell|-1} \right)
	q^{ \left(k+\frac12\right)\left(n+\frac{2i-1}{4k+2}\right)^2
		- \frac{\left(2k\ell+i+\ell-\frac12 \right)^2}{4k+2} }
	(1+q^n)
.
\end{align*}
As such, we need only consider the case where $\ell\ge0$.

We prove this identity after the change of variable $i\mapsto k-i$.
We apply \eqref{EqBaileyLemmaKIFoldLimit}
with $\rho\mapsto -q^{\ell+1}$ to the Bailey pair
in \eqref{Eq:GeneralBaileyPairLDef} with $a=q^{2\ell+1}$. Doing so yields
\begin{align*}
&
	\sum_{n_1,n_2,\dotsc,n_k\ge0}
	\frac{
		\left(-q^{\ell+1}\right)_{N_1} 		
		q^{ -(\ell+1)N_1 + (2\ell+1)(N_1+N_2+\dotsb+N_k)
			+ \frac{N_1(N_1-1)}{2}+N_2^2+N_3^2+\dotsb+N_k^2
			- N_2 - N_3 - \dotsb - N_i	}
	}{(q)_{n_1} (q)_{n_2} \dotsm (q)_{n_k} \left(q^{2\ell+1}\right)_{n_k} }
\\
&=
	\sum_{n_1,n_2,\dotsc,n_k\ge 0}
	\frac{
		q^{ \frac{N_1(N_1+1)}{2}+N_2^2+N_3^2+\dotsb+N_k^2
			+ \ell N_1 + (2\ell+1)(N_2+N_3+\dotsb+N_k)
			-N_1 - N_2 - \dotsb - N_i	}
		\left(-q^{\ell+1}\right)_{N_1} 		
	}{(q)_{n_1} (q)_{n_2} \dotsm (q)_{n_k} \left(q^{2\ell+1}\right)_{n_k} }
\\
&=
	\frac{(-q^{\ell})_\infty}{(q^{2\ell+2})_\infty}
	\sum_{n=0}^\infty
	\frac{
		(-q^{\ell+1})_n
		q^{-(\ell+1)n + (2\ell+1)kn + (k-\frac12)n^2 + (\frac{1}{2}-i)n + n^2 + 2\ell n}
	}{(-q^\ell)_n(1-q^{2\ell+1})}
	\\&\quad
	-
	\frac{(-q^{\ell})_\infty}{(q^{2\ell+2})_\infty}
	\sum_{n=1}^\infty
	\frac{
		(-q^{\ell+1})_n
		q^{-(\ell+1)n + (2\ell+1)(kn+i-k+1) + (k-\frac12)n^2 + (\frac{5}{2}+i-2k)n + k-i-2+(n-1)^2 + 2\ell (n-1)}
	}{(-q^\ell)_n(1-q^{2\ell+1})}
\\
&=
	\frac{(-q^{\ell+1})_\infty}{(q^{2\ell+1})_\infty}
	\sum_{n=0}^\infty
		q^{(k+\frac12)n^2 + (2k\ell+k-i+\ell-\frac12)n} (1+q^{n+\ell})
	\\&\quad
	-
	\frac{(-q^{\ell+1})_\infty}{(q^{2\ell+1})_\infty}
	\sum_{n=1}^\infty
		q^{(k+\frac12)n^2 + (2k\ell-k+i+\ell-\frac12)n + 2\ell(i-k)} (1+q^{n+\ell})
\\
&=
	\frac{(-q^{\ell+1})_\infty}{(q^{2\ell+1})_\infty}
	\left( \sum_{n\ge\ell} - \sum_{n\le -\ell-1} \right)
	q^{ \left(k+\frac12\right)\left(n +\frac{2k-2i-1}{4k+2}\right)^2 
		- \frac{(2k\ell+k-i+\ell-\frac12)^2}{4k+2}  }
	(1+q^n)
.
\end{align*} 
The claimed identity then follows. When $i=0$,
\begin{align*}
\left( \sum_{n\ge\ell} - \sum_{n\le -\ell-1} \right)
	q^{ \left(k+\frac12\right)\left(n +\frac{2k-1}{4k+2}\right)^2 
		- \frac{(2k\ell+k+\ell-\frac12)^2}{4k+2} + n }
&=
\left( \sum_{n\ge0} - \sum_{n\ge0} \right)
	q^{ (k+\frac12)n^2  + (2k\ell+k+\ell+\frac12)n + \ell }
=
0,
\end{align*}
which finishes the proof of the proposition.
\end{proof}

\begin{remark} It would be interesting to connect the series in Proposition \ref{PropGeneralShifted1} with characters 
of indecomposable modules for the $N=1$ singlet algebra.
\end{remark}

\section{Modular Identities}

In this part we prove a family of $q$-series identities for certain theta-like series that can be expressed 
as linear combinations of unary theta functions of weight $\frac12$ and $\frac32$.
Such objects appear in the study of {\em logarithmic} modular forms (e.g. modular forms that 
are sums of ordinary modular forms of different weight). This terminology originates 
in Logarithmic Conformal Field Theory, where characters of representations often exhibit this property \cite{AM2, AM1}.
%Proof of both theorems is based on Andrews' analytic form of Gordon identities. Next result can be proven using "shifted" identities

\begin{lemma}\label{shift-help}
Suppose $i,k\in\Z$ and $0\le i\le k$. Then
%the case i=k=0 also holds, unlike most of theorems in this article
\begin{align*}
& \sum_{n \in \mathbb{Z}} (2n+1) q^{\frac{2k+1}{2}(n+\frac{i}{2k+1})^2}  
= 
	\sum_{n \in \mathbb{Z}} {\rm sgn}(n)q^{ \frac{2k+1}{2}\left( n+\frac{i}{2k+1}\right)^2}
	+ 
	2 \sum_{\ell \geq 1} \left( \sum_{n\ge\ell} - \sum_{n\le-\ell-1} \right) 
	q^{ \frac{2k+1}{2} \left( n+\frac{i}{2k+1}\right)^2}
.
\end{align*}
\end{lemma}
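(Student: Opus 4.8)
The plan is to prove the identity by rewriting the weight-$\tfrac12$-plus-weight-$\tfrac32$ series $\sum_{n\in\Z}(2n+1)q^{\frac{2k+1}{2}(n+\frac{i}{2k+1})^2}$ as a ``telescoping'' sum of shifted false theta series. First I would introduce the shorthand $Q(n):=q^{\frac{2k+1}{2}(n+\frac{i}{2k+1})^2}$, so the left-hand side is $\sum_{n\in\Z}(2n+1)Q(n)$ and the claimed right-hand side is $\sum_{n\in\Z}\sgn(n)Q(n)+2\sum_{\ell\ge1}\bigl(\sum_{n\ge\ell}-\sum_{n\le-\ell-1}\bigr)Q(n)$. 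The key observation is that for a fixed integer $m\ge0$ the coefficient of $Q(m)$ on the right-hand side is $\sgn(m)+2\cdot(\text{number of }\ell\ge1\text{ with }\ell\le m)$, and similarly for $m<0$; one checks these coefficients match $2m+1$.

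Concretely, I would carry this out as follows. Fix $m\ge1$. On the right-hand side the term $Q(m)$ occurs once in $\sum_{n\in\Z}\sgn(n)Q(n)$ with coefficient $\sgn(m)=+1$, and it occurs in the inner sum $\sum_{n\ge\ell}Q(n)$ precisely for those $\ell$ with $1\le\ell\le m$, i.e.\ for $m$ values of $\ell$, each contributing $+2$; it never occurs in $\sum_{n\le-\ell-1}Q(n)$ since $m\ge1>-\ell-1$. Hence the total coefficient of $Q(m)$ is $1+2m=2m+1$, matching the left-hand side. For $m=0$ the coefficient is just $\sgn(0)=1=2\cdot0+1$ (no $\ell\ge1$ satisfies $\ell\le0$), again matching. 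For $m\le-1$, write $m=-p$ with $p\ge1$: the $\sgn$ term contributes $-1$, and $Q(m)$ occurs in $\sum_{n\le-\ell-1}Q(n)$ exactly when $-\ell-1\ge m$, i.e.\ $\ell\le p-1$, giving $p-1$ values of $\ell\ge1$, each contributing $-2$; it never occurs in $\sum_{n\ge\ell}Q(n)$. So the coefficient is $-1-2(p-1)=-2p+1=2m+1$, again matching. Since the coefficients of every $Q(m)$ agree, and since both sides are absolutely convergent power series in $q$ (for $|q|<1$ each $Q(m)$ has positive $q$-exponent, and the exponent grows quadratically in $m$ while the multiplicities grow only linearly, so rearrangement is justified), the identity follows.

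The only genuine care needed is the interchange of the double sum $\sum_{\ell\ge1}\bigl(\sum_{n\ge\ell}-\sum_{n\le-\ell-1}\bigr)Q(n)$ with the single sum $\sum_{n}(\text{multiplicity})Q(n)$; this is the step I would flag as the main (though mild) obstacle. It is handled by absolute convergence: $\sum_{\ell\ge1}\sum_{n\ge\ell}|Q(n)|=\sum_{n\ge1}n|Q(n)|<\infty$ because $|Q(n)|=|q|^{\frac{2k+1}{2}(n+\frac{i}{2k+1})^2}$ decays faster than any polynomial, and similarly for the other piece, so Tonelli/Fubini applies and the reindexing above is legitimate. With that justified, the computation of the three cases ($m\ge1$, $m=0$, $m\le-1$) completes the proof. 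One small subtlety to note explicitly is the edge behaviour at $n=\pm\ell$ in the inner sums --- the inequalities $n\ge\ell$ and $n\le-\ell-1$ are inclusive and disjoint for each $\ell\ge1$, which is exactly what makes the counting above clean.
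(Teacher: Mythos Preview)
Your proof is correct and is exactly the kind of elementary coefficient comparison the paper has in mind; the paper itself omits the proof entirely, declaring it ``near trivial.'' Your careful case split on $m\ge1$, $m=0$, $m\le-1$ together with the absolute-convergence justification for interchanging sums is precisely the routine verification that makes the lemma trivial, so there is nothing to add.
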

\begin{proof}
We omit the proof as it is near trivial.
\end{proof}

Let $k \geq 2$ and let $A$ be the $(k+1) \times (k+1)$ matrix defined by
$$A:=\begin{pmatrix} 
\frac{3}{2} & -\frac12 & 0 & ... & ... & ... & ... 
\\[0.5ex] 
-\frac12 & 1 & -\frac12 & ... & ... & ... & ...
\\[0.5ex] 
0 & -\frac12 & 1 & ... & ... & ... & ...
\\ 
... & ... & ... & ... & ... & ... & ... 
\\
... & ... & ... & ...& 1 & -\frac12 & -\frac12 
\\[0.5ex] 
... & ... & ... & ...& -\frac12 & 1 & 0 
\\[0.5ex] 
... & ... & ... & ...& -\frac12 & 0 & 1 
\end{pmatrix}.$$
We note that 
$$A=\frac{1}{2}\left(D_{k+1} + E_{1,1}\right),$$ 
where $D_{k+1}$ is the Cartan matrix of $D$-type. It is easy to show
\begin{equation}\label{A-inverse}
A^{-1}=
\begin{pmatrix} 
	1 & 1 & 1 & ... & 1 & \frac12 & \frac12 
	\\[0.5ex]  
	1 & 3 & 3 & ... & 3 & \frac32 & \frac32 
	\\[0.5ex]
	1 & 3 & 5 & ... & 5 & \frac52 & \frac52 
	\\[0.5ex] 
	... & ... & ... & ... & ... & ... & ... 
	\\[0.5ex] 
	1 & 3 & 5 & ...& 2k-3 & \frac{2k-3}{2} & \frac{2k-3}{2} 
	\\[0.5ex] 
	\frac12 & \frac32 & \frac52 & ...& \frac{2k-3}{2} & \frac{2k+1}{4} & \frac{2k-3}{4} 
	\\[0.5ex] 
	\frac{1}{2} & \frac32 & \frac52 & ...& \frac{2k-3}{2} & \frac{2k-3}{4} & \frac{2k+1}{4} 
\end{pmatrix}
\end{equation}
or in more compact form
$$(A^{-1})_{i,j}=\left\{ \begin{array}{cc} {\rm min}\{2i-1,2j-1\} , &  1 \leq i,j \leq k-1  \\   \frac{2j-1}{2},  &      1 \leq j \leq k-1, k \leq i \leq k+1   \\   \frac{2k+1}{4}, &  i=j \in \{k, k+1 \} \\ \frac{2k-3}{4}, & i=k, j=k+1 \end{array} \right.$$
For $k=1$, we let $A^{-1}=\begin{pmatrix} \frac34 & -\frac14 \\ -\frac14 & \frac34 \end{pmatrix}$.

Now we can state G\"ollnitz-Gordon-Andrews-type identities for the series in Lemma \ref{shift-help}.
\begin{theorem} \label{NS-char-log} 
Suppose $i\in\Z$, $k \in \mathbb{N}$, and $0\le i\le k$. Then
\begin{multline*}
q^{-\frac{i^2}{2(2k+1)}} \frac{(-q^{1/2})_\infty}{(q)_\infty} 
	\sum_{n \in \mathbb{Z}} (2n+1) q^{\frac{2k+1}{2}(n+\frac{i}{2k+1})^2}
\\	
=
	\sum_{n_1,n_2,\dotsc,n_{k+1} \geq 0 \atop n_{k} \equiv n_{k+1} \mod 2} 
	\frac{q^{\frac{1}{2}{\bf n} \cdot A^{-1} \cdot {\bf n}^T + n_{k-i+1} + 2n_{k-i+2}+\dotsb + (i-1)n_{k-1}+\frac{i}{2}(n_{k}+n_{k+1})} 
		(-q^{\frac12})_{n_1+n_2+\dotsb + n_{k-1}+ \frac{n_{k}+n_{k+1}}{2}}}
	{(q)_{n_1} (q)_{n_2} \cdots (q)_{n_{k+1}}}
,
\end{multline*}
where ${\bf n}=(n_1,...,n_{k+1})$.
\end{theorem}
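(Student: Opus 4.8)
The plan is to reduce Theorem \ref{NS-char-log} to the shifted false theta identities of Proposition \ref{PropGeneralShifted1/2} together with the decomposition in Lemma \ref{shift-help}. First I would apply Lemma \ref{shift-help} to rewrite
\[
\sum_{n\in\Z}(2n+1)q^{\frac{2k+1}{2}(n+\frac{i}{2k+1})^2}
= \sum_{n\in\Z}\sgn(n)q^{\frac{2k+1}{2}(n+\frac{i}{2k+1})^2}
+ 2\sum_{\ell\ge1}\Bigl(\sum_{n\ge\ell}-\sum_{n\le-\ell-1}\Bigr)q^{\frac{2k+1}{2}(n+\frac{i}{2k+1})^2},
\]
so that the left-hand side of the theorem, after multiplying by $q^{-i^2/(2(2k+1))}(-q^{1/2})_\infty/(q)_\infty$, becomes the $\ell=0$ case of Proposition \ref{PropGeneralShifted1/2} plus twice the sum over $\ell\ge1$ of the (re-normalized) $\ell$-th cases of that proposition. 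The key point is that the $q$-power normalizations match: the exponent shift $-\tfrac{(2k\ell+i+\ell)^2}{4k+2}$ in Proposition \ref{PropGeneralShifted1/2} combines with $q^{-i^2/(2(2k+1))}$ to produce exactly the needed $\ell$-dependent factor after one accounts for the difference between $\sum_{n\ge\ell}$ and $\sum_{n\ge|\ell|}$ (which agree for $\ell\ge0$). So the left side of the theorem equals
\[
\sum_{\ell\ge0}(2-[\ell=0])\,q^{\frac{(2k\ell+i+\ell)^2}{4k+2}-\frac{i^2}{2(2k+1)}}\cdot RHS_\ell,
\]
where $RHS_\ell$ denotes the $k$-fold multisum on the right of Proposition \ref{PropGeneralShifted1/2}.

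Next I would collapse this sum over $\ell$ into a single $(k{+}1)$-fold multisum by introducing two new summation variables $n_k,n_{k+1}$ with $n_k\equiv n_{k+1}\pmod2$ in place of the single index appearing in the $(q)_{n_k+2\ell}$ denominator. Concretely, in $RHS_\ell$ the relevant denominator factor is $(q)_{n_k}(q)_{n_k+2\ell}$ (using the Proposition's notation, where its ``$n_k$'' is my new variable); I would rename the Proposition's last index, say to $m$, and set $n_k=m$, $n_{k+1}=m+2\ell$ when $\ell\ge0$, so $\ell=(n_{k+1}-n_k)/2\ge0$ forces the parity condition, and $(q)_m(q)_{m+2\ell}=(q)_{n_k}(q)_{n_{k+1}}$. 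The factor $2-[\ell=0]$ gets absorbed because swapping $(n_k,n_{k+1})$ gives the $\ell\le0$ contributions: summing over all $n_k,n_{k+1}\ge0$ with $n_k\equiv n_{k+1}$ double-counts every $\ell\ge1$ term once as $(m,m+2\ell)$ and once as $(m+2\ell,m)$ while counting $\ell=0$ once, which is exactly the weighting $2-[\ell=0]$ — provided the summand is symmetric in $n_k\leftrightarrow n_{k+1}$. Here I must check that after substitution the exponent and the $(-q^{1/2})_{\bullet}$ factor are genuinely symmetric in $n_k,n_{k+1}$; this should follow since everything depends on these only through $N_1=n_1+\dots+n_{k-1}+\tfrac{n_k+n_{k+1}}{2}$ and $n_k+n_{k+1}$ once the $\ell$-dependence is written out — this symmetry check is the step I expect to be the main obstacle, since the $\ell$-dependent pieces of the exponent in Proposition \ref{PropGeneralShifted1/2}, namely $(\ell+\tfrac12)N_1+(2\ell+1)(N_2+\dots+N_k)$ together with $\ell=(n_{k+1}-n_k)/2$ and the global $q$-power from the $\ell$-sum, must conspire to depend on $n_k,n_{k+1}$ symmetrically and to assemble into the quadratic form $\tfrac12\mathbf{n}A^{-1}\mathbf{n}^T$.

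Finally I would carry out the linear-algebra bookkeeping identifying the resulting quadratic exponent with $\tfrac12\mathbf{n}\cdot A^{-1}\cdot\mathbf{n}^T$ plus the stated linear terms. Recall $A^{-1}=\tfrac12(D_{k+1}+E_{1,1})^{-1}$ in the form given by \eqref{A-inverse}; writing the exponent from Proposition \ref{PropGeneralShifted1/2} in terms of the $N_j$'s and then re-expressing $N_j$ through $\mathbf n=(n_1,\dots,n_{k+1})$ via $N_j=n_j+\dots+n_{k-1}+\tfrac{n_k+n_{k+1}}{2}$ for $j\le k-1$ and $N_k=\tfrac{n_k+n_{k+1}}{2}$ (after folding in $\ell=(n_{k+1}-n_k)/2$ and the extra $q$-power $q^{\frac{(2k\ell+i+\ell)^2}{4k+2}-\frac{i^2}{2(2k+1)}}$), one gets precisely $\tfrac12 N_1^2+N_2^2+\dots+N_k^2+\tfrac12 N_1$ together with cross terms and the $\ell^2$ contribution, which is a standard computation: the matrix $A^{-1}$ was reverse-engineered so that $\tfrac12\mathbf n A^{-1}\mathbf n^T$ equals exactly this expression. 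The linear shift $n_{k-i+1}+2n_{k-i+2}+\dots+(i-1)n_{k-1}+\tfrac i2(n_k+n_{k+1})$ arises from the $-N_1-N_2-\dots-N_{k-i}$ terms in the Proposition (after the change of variable $i\mapsto k-i$ already performed there) re-expressed in the $\mathbf n$ coordinates, and the $(-q^{1/2})_{N_1+\ell}$ factor becomes $(-q^{1/2})_{n_1+\dots+n_{k-1}+(n_k+n_{k+1})/2}$ since $N_1+\ell = n_1+\dots+n_{k-1}+\tfrac{n_k}{2}+\tfrac{n_k+n_{k+1}}{2}$... here I would double-check the index on the $(-q^{1/2})_\bullet$ Pochhammer carefully, as it is the most error-prone substitution; a short direct verification for $k=1$ and $k=2$ against the definitions of $A^{-1}$ given in the excerpt would serve as a useful sanity check before presenting the general argument.
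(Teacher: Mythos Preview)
Your proposal is correct and follows essentially the paper's own argument: decompose the left side via Lemma~\ref{shift-help}, apply Proposition~\ref{PropGeneralShifted1/2} for each $\ell$, and verify that the resulting exponents assemble into $\tfrac12\,\mathbf n\, A^{-1}\mathbf n^T$ plus the stated linear terms. The paper organizes the $\ell$-sum slightly more cleanly by letting $\ell$ range over all of $\mathbb Z$ (Proposition~\ref{PropGeneralShifted1/2} is already stated for negative $\ell$, and the $|\ell|$ on its false-theta side gives $F_\ell=F_{-\ell}$ automatically), which bypasses your flagged $n_k\leftrightarrow n_{k+1}$ symmetry check; also note the slip in your displayed formula for $N_1+\ell$ --- the correct value is $n_1+\dots+n_{k-1}+\tfrac{n_k+n_{k+1}}{2}$.
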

\begin{proof}
We first rewrite the identity in Proposition \ref{PropGeneralShifted1/2} as
\begin{align*}
&F_\ell(q) := 
	q^{-\frac{i^2}{4k+2}}
	\frac{ (-q^{\frac12})_\infty}{(q)_\infty}
	\left( \sum_{n\ge |\ell|} - \sum_{n\le -|\ell|-1} \right)
		q^{ \left(k+\frac12\right)\left(n+\frac{i}{2k+1}\right)^2 }
\\
&=
	\sum_{n_1,n_2,\dotsc,n_k\ge0}
	\frac{
		q^{(k+\frac12)\ell^2 + i\ell + \frac{N_1(N_1+1)}{2}+N_2^2+N_3^2+\dotsb+N_k^2 + (\ell+\frac12)N_1 
			+ (2\ell+1)(N_2+N_3+\dotsb+N_k) - N_{1}-N_{2}-\dotsb-N_{k-i} }
	}{(q)_{n_1}(q)_{n_2}\dotsm(q)_{n_k}(q)_{n_k+2\ell}}
	\\&\qquad\qquad\times
	(-q^{\frac12})_{N_1+\ell} 
.
\end{align*}

From formula (\ref{A-inverse}) for $A^{-1}$, we easily get 
\begin{align*}
\frac{1}{2}(n_1,\dotsc,n_{k-1},0,0) \cdot A^{-1} \cdot (n_1,\dotsc,n_{k-1},0,0)^T
&=
	\frac{1}{2} \left(\sum_{i=1}^{k-1} n_i\right)^2 
	+ 
	\sum_{j=2}^{k-1}\left(\sum_{i=j}^{k-1} n_i\right)^2
,\\ 
(0,\dotsc,0,n_{k},n_{k}+2 \ell) \cdot A^{-1} \cdot (n_1,\dotsc,n_{k-1},0,0)^T
&= 
	\frac{1}{2}\sum_{i=1}^{k-1}
	(2i-1)n_i(2n_k+2\ell)
,\\ 
\frac{1}{2}(0,\dotsc,0,n_{k},n_{k}+2 \ell) \cdot A^{-1} \cdot (0,\dotsc,0,n_k,n_{k}+2 \ell)^T
&=
	\frac{2k+1}{8}\left( n_k^2 +(n_k+2\ell)^2\right)
	\\&\quad
	+\frac{2k-3}{4}n_k(n_k+2\ell)
. 
\end{align*}
We then have that
\begin{align*}
&\tfrac12(n_1,n_2,\dotsc,n_k,n_k+2\ell)  A^{-1} (n_1,n_2,\dotsc,n_k,n_k+2\ell)^T
\\&=
	\tfrac12 N_1^2 + N_2^2 + N_3^2 +\dotsb +N_k^2 
	+\ell N_1 + 2\ell(N_2+N_3+\dotsb+N_k)
	+ (k+\tfrac12)\ell^2
.
\end{align*}
As such,
\begin{align*}
&\sum_{\substack{n_1,n_2,\dotsc,n_{k+1}\ge0 \\ n_k\equiv n_{k+1}\pmod{2} }}
\frac{
	q^{\frac{1}{2}{\bf n} \cdot A^{-1} \cdot {\bf n}^T 
		+ n_{k-i+1} + 2n_{k-i+2} + \dotsb + (i-1)n_{k-1} + \frac{i}{2}(n_k+n_{k+1})  }
	(-q^{\frac12})_{n_1+n_2+\dotsb+n_{k-1}+\frac{n_k+n_{k+1}}{2} }
}{(q)_{n_1}(q)_{n_2}\dotsm (q)_{n_{k+1}}}
\\
&=
	\sum_{\substack{n_1,n_2,\dotsc,n_k\ge0\\ \ell\in\mathbb{Z}} }
	\frac{
		q^{ \frac12 N_1^2 + N_2^2 + N_3^2 +\dotsb N_k^2 + \ell N_1 + 2\ell(N_2+N_3+\dotsb+N_k)
			+ (k+\frac12)\ell^2
			+ n_{k-i+1} + 2n_{k-i+2} + \dotsb + (i-1)n_{k-1} + in_k + i\ell  }
	}{(q)_{n_1}(q)_{n_2}\dotsm (q)_{n_k} (q)_{n_k+2\ell}}
	\\&\qquad\qquad\times
	(-q^{\frac12})_{N_1+\ell }
\\
&=
	\sum_{\substack{n_1,n_2,\dotsc,n_k\ge0\\ \ell\in\mathbb{Z}}}
	\frac{
		q^{(k+\frac12)\ell^2 + i\ell + \frac{N_1(N_1+1)}{2}+N_2^2+N_3^2+\dotsb+N_k^2 + (\ell+\frac12)N_1 
			+ (2\ell+1)(N_2+N_3+\dotsb+N_k) - N_{1}-N_{2}-\dotsb-N_{k-i} }
	}{(q)_{n_1}(q)_{n_2}\dotsm(q)_{n_k}(q)_{n_k+2\ell}}
	\\&\qquad\qquad\times
	(-q^{\frac12})_{N_1+\ell} 
\\
&=
	\sum_{ \ell\in\mathbb{Z}}F_\ell(q)
=
	q^{-\frac{i^2}{4k+2}}\frac{(-q^{\frac12})_\infty}{(q)_\infty}
	\sum_{n\in\mathbb{Z}} (2n+1) q^{(k+\frac12)\left(n+\frac{i}{2k+1}\right)^2} 
,
\end{align*}
where the final equality is by Lemma \ref{shift-help}.
This proves the claim.
\end{proof}

We record the simplest case with $i=k=1$.
\begin{corollary} \label{log-1}
The following identity holds,
$$q^{-1/6} \frac{(-q^{1/2})_\infty}{(q)_\infty} \sum_{n \in \mathbb{Z}} (2n+1) q^{\frac32(n+\frac13)^2}
=\sum_{n_1,n_2 \geq 0 \atop n_1 \equiv n_2 \mod 2}  
\frac{q^{\frac38 n_1^2 + \frac38 n_2^2 - \frac14 n_1n_2 +\frac12(n_1+n_2)} (-q^{1/2})_{\frac{n_1+n_2}{2}}}{(q)_{n_1} (q)_{n_2}}.$$
\end{corollary}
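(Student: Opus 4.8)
The plan is to obtain this identity as the special case $i = k = 1$ of Theorem \ref{NS-char-log}; nothing new is required beyond that theorem, so the argument consists simply of carrying out the substitution and simplifying both sides. First I would set $k = 1$: then $\mathbf n = (n_1, n_2)$, the summation condition $n_k \equiv n_{k+1} \mod 2$ reads $n_1 \equiv n_2 \mod 2$, the denominator $(q)_{n_1}\cdots(q)_{n_{k+1}}$ becomes $(q)_{n_1}(q)_{n_2}$, and the relevant inverse matrix is the separately prescribed $2\times 2$ matrix $A^{-1} = \begin{pmatrix} \tfrac34 & -\tfrac14 \\ -\tfrac14 & \tfrac34 \end{pmatrix}$, so that $\tfrac12\,\mathbf n A^{-1}\mathbf n^T = \tfrac38 n_1^2 + \tfrac38 n_2^2 - \tfrac14 n_1 n_2$, exactly the quadratic form appearing in the exponent on the right of the corollary.

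Next I would set $i = 1$. The linear string $n_{k-i+1} + 2n_{k-i+2} + \dotsb + (i-1)n_{k-1}$ is then empty, since its coefficients run only up to $i-1 = 0$; likewise the subscript $n_1 + \dotsb + n_{k-1} + \tfrac{n_k + n_{k+1}}{2}$ of the factor $(-q^{1/2})_{\bullet}$ collapses to $\tfrac{n_1+n_2}{2}$ because $k - 1 = 0$. The only surviving linear contribution to the exponent is $\tfrac{i}{2}(n_k + n_{k+1}) = \tfrac12(n_1 + n_2)$. On the left-hand side, $\tfrac{2k+1}{2} = \tfrac32$, $\tfrac{i}{2k+1} = \tfrac13$, and the prefactor exponent is $-\tfrac{i^2}{2(2k+1)} = -\tfrac16$, producing $q^{-1/6}\tfrac{(-q^{1/2})_\infty}{(q)_\infty}\sum_{n\in\mathbb{Z}}(2n+1)q^{\frac32(n + \frac13)^2}$. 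Assembling these substitutions reproduces the stated identity verbatim.

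There is essentially no obstacle here, since all the content lives in Theorem \ref{NS-char-log}. The one point to watch is that for $k = 1$ the matrix $A$ is not given by the displayed banded pattern and $A^{-1}$ is supplied by hand; so I would confirm that the matrix identities invoked in the proof of Theorem \ref{NS-char-log} — in particular the evaluation of $\tfrac12\,\mathbf n A^{-1}\mathbf n^T$ with $\mathbf n = (n_1,\, n_1 + 2\ell)$ as $\tfrac12 n_1^2 + \ell n_1 + \tfrac32\ell^2$ — remain valid in this degenerate case, which is a one-line check. Alternatively, the same identity can be built from scratch by specializing Proposition \ref{PropGeneralShifted1/2} to $k = 1$ and summing the resulting $F_\ell(q)$ over $\ell \in \mathbb{Z}$ via Lemma \ref{shift-help}, which is precisely the argument of Theorem \ref{NS-char-log} carried out in the case at hand.
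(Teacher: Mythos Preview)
Your proposal is correct and follows exactly the paper's approach: the corollary is simply recorded as the case $i=k=1$ of Theorem \ref{NS-char-log}, and your substitution and simplification of both sides is accurate, including the check that the separately prescribed $2\times 2$ matrix $A^{-1}$ yields the quadratic form $\tfrac38 n_1^2 + \tfrac38 n_2^2 - \tfrac14 n_1 n_2$.
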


\begin{remark}
Notice that for $i=0$, the sum on the left-hand side side in Theorem \ref{NS-char-log} simplifies, so we can write
$$ \frac{(-q^{1/2})_\infty}{(q)_\infty} 
	\sum_{n \in \mathbb{Z}}  q^{\left(k+\frac12 \right) n^2}
=\sum_{n_1,n_2,\dotsc,n_{k+1} \geq 0 \atop n_{k} \equiv n_{k+1} \mod 2} 
	\frac{q^{\frac{1}{2}{\boldsymbol{n}}^T \cdot A^{-1} \cdot {\boldsymbol{n}}} (-q^{\frac12})_{n_1+n_2+\cdots + n_{k-1}+ \frac{n_{k}+n_{k+1}}{2}}}
	{(q)_{n_1}(q)_{n_2} \cdots (q)_{n_{k+1}}}.$$
It would be interesting to find a combinatorial description of this identity. 
\end{remark}

There are similar identities coming from Proposition \ref{PropGeneralShifted1}.
The $i=k$ case is more elegant than the general case, and so we record it as
a separate theorem below. In both cases, we omit the proofs as they are near
identical to that of Theorem \ref{NS-char-log}.

\begin{theorem} \label{log-Ramond}
For $k\in\N$, 
\begin{align*}
&q^{-\frac{(2k-1)^2}{8(2k+1)}} \frac{(-q)_\infty}{(q)_\infty} \sum_{n \in \mathbb{Z}} (2n+1) q^{\frac{2k+1}{2}(n+\frac{2k-1}{2(2k+1)})^2}
\\
&=\sum_{n_1,n_2,\dotsc,n_{k+1} \geq 0 \atop n_{k} \equiv n_{k+1} \mod 2} 
\frac{q^{\frac{1}{2}{\boldsymbol{n}}^T \cdot A^{-1} \cdot {\boldsymbol{n}}
	+\frac12 n_1+\frac32 n_2+\cdots +\frac{2k-3}{2} n_{k-1}+\frac{2k-1}{4}(n_{k}+n_{k+1})} 
(-q)_{n_1+n_2+\cdots + n_{k-1}+ \frac{n_{k}+n_{k+1}}{2}}}{(q)_{n_1} (q)_{n_2} \cdots (q)_{n_{k+1}}}.
\end{align*}
\end{theorem}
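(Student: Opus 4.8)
The plan is to follow the proof strategy already laid out for Theorem~\ref{NS-char-log}, which the authors explicitly say is ``near identical'', but now feeding in the $i=k$ specialization of Proposition~\ref{PropGeneralShifted1} instead of Proposition~\ref{PropGeneralShifted1/2}. First I would rewrite the $i=k$ case of Proposition~\ref{PropGeneralShifted1} in a ``normalized'' form analogous to the function $F_\ell(q)$ appearing in the proof of Theorem~\ref{NS-char-log}: multiply both sides by $q^{-\frac{(2k-1)^2}{8(2k+1)}}$ (after accounting for the shift $-\frac{(2k\ell+k+\ell-\frac12)^2}{4k+2}$ on the theta side, which must be split off as a $q$-power prefactor $q^{(k+\frac12)\ell^2 + (k-\frac12)\ell + \text{const}}$ on the hypergeometric side). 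This produces a family $G_\ell(q)$, $\ell\in\Z$, where $G_\ell(q)$ equals $q^{-\frac{(2k-1)^2}{8(2k+1)}}\frac{(-q)_\infty}{(q)_\infty}\bigl(\sum_{n\ge|\ell|}-\sum_{n\le-|\ell|-1}\bigr)q^{(k+\frac12)(n+\frac{2k-1}{2(2k+1)})^2}$ on one side and a $k$-fold sum with summand carrying $(-q)_{N_1+\ell}$ on the other.

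Next I would perform the quadratic-form bookkeeping. The key computation is the identity
\begin{align*}
&\tfrac12(n_1,n_2,\dotsc,n_k,n_k+2\ell)\,A^{-1}\,(n_1,n_2,\dotsc,n_k,n_k+2\ell)^T
\\&=
\tfrac12 N_1^2 + N_2^2 + \dotsb + N_k^2 + \ell N_1 + 2\ell(N_2+\dotsb+N_k) + (k+\tfrac12)\ell^2,
\end{align*}
which is exactly the block computation already carried out in the proof of Theorem~\ref{NS-char-log} (the three displayed sub-identities for the $A^{-1}$ blocks), so I can simply cite it. The only new wrinkle is matching the \emph{linear} terms: in Theorem~\ref{log-Ramond} the linear contribution is $\tfrac12 n_1 + \tfrac32 n_2 + \dotsb + \tfrac{2k-3}{2}n_{k-1} + \tfrac{2k-1}{4}(n_k+n_{k+1})$, and one checks that under the substitution $n_{k+1}=n_k+2\ell$ this equals $\tfrac12(2\cdot 1-1)n_1 + \dotsb + \tfrac12(2(k-1)-1)n_{k-1} + \tfrac12(2k-1)n_k + \tfrac{2k-1}{2}\ell$, i.e. it reproduces precisely the $\ell N_1 + 2\ell(N_2+\dotsb+N_k)$ piece's companion together with the ``$-N_1-\dotsb-N_{k-i}$ vanishes when $i=k$'' simplification and the residual $\ell$-linear term $(k-\tfrac12)\ell$ that was split off from the theta-side shift. (This is why the $i=k$ case is cleaner: the term $-N_1-\dotsb-N_{k-i}$ is empty.)

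Then I would sum over $\ell\in\Z$. On the hypergeometric side, reindexing $n_{k+1}=n_k+2\ell$ turns the constraint $n_k\equiv n_{k+1}\pmod 2$ with $n_{k+1}\ge 0$ into a sum over all $\ell\in\Z$ with $n_k\ge 0$ and $n_k+2\ell\ge 0$ — the latter being automatically enforced by $(q)_{n_k+2\ell}^{-1}=0$ for $n_k+2\ell<0$, exactly as in Proposition~\ref{PropGeneralShifted1}. On the theta side, I invoke Lemma~\ref{shift-help} (with $i$ there replaced by $\frac{2k-1}{2}$, or rather its mild reformulation with a general half-integral shift parameter, which is equally ``near trivial'') to collapse $\sum_{\ell\in\Z}G_\ell(q)$ into $q^{-\frac{(2k-1)^2}{8(2k+1)}}\frac{(-q)_\infty}{(q)_\infty}\sum_{n\in\Z}(2n+1)q^{\frac{2k+1}{2}(n+\frac{2k-1}{2(2k+1)})^2}$. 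I expect the main obstacle to be purely clerical rather than conceptual: correctly tracking the constant $q$-power that gets shifted between the theta-side exponent $-\frac{(2k\ell+k+\ell-\frac12)^2}{4k+2}$ and the hypergeometric side across the $\ell\mapsto -\ell$ symmetrization, and verifying that the $\ell$-independent part of that constant is precisely $-\frac{(2k-1)^2}{8(2k+1)}$ so that it can be pulled out front; a sign error or an off-by-$\ell$ slip there would break the telescoping into $(2n+1)$. Since Proposition~\ref{PropGeneralShifted1} already records the relevant $i=k$ simplification ($\sum_{n\ge|\ell|}-\sum_{n\le-|\ell|-1}$ of a pure Gaussian with no residual $(1+q^n)$ factor), this bookkeeping is the same one the authors performed there, so I would organize the write-up to reuse those computations verbatim and only highlight the change of the linear terms and the shift constant.
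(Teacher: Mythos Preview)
Your proposal is correct and follows exactly the route the paper intends: feed the $i=k$ case of Proposition~\ref{PropGeneralShifted1} into the same quadratic-form bookkeeping and $\ell$-summation used to prove Theorem~\ref{NS-char-log}, then apply Lemma~\ref{shift-help}. Your tracking of the constants is also right: the shift $-\frac{(2k\ell+k+\ell-\frac12)^2}{4k+2}$ expands as $-(k+\tfrac12)\ell^2-\tfrac{2k-1}{2}\ell-\tfrac{(2k-1)^2}{8(2k+1)}$, so multiplying by $q^{(k+\frac12)\ell^2+\frac{2k-1}{2}\ell}$ on both sides pulls out exactly $q^{-\frac{(2k-1)^2}{8(2k+1)}}$ on the theta side and supplies precisely the missing $(k+\tfrac12)\ell^2+\tfrac{2k-1}{2}\ell$ on the hypergeometric side to match $\tfrac12\mathbf{n}A^{-1}\mathbf{n}^T$ plus the stated linear terms under $n_{k+1}=n_k+2\ell$; and your observation that Lemma~\ref{shift-help} is really a statement about decomposing $(2n+1)$ into shifted sign functions (so the half-integral shift in the exponent is irrelevant) is exactly the right justification.
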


For general $0\le i\le k$, we instead have the following.
\begin{theorem}\label{log-Ramond2}
Suppose $i\in\Z$, $k\in\N$, and $0\le i\le k$, then
\begin{align*}
&q^{-\frac{(2i-1)^2}{8(2k+1)}} \frac{(-q)_\infty}{(q)_\infty} 
	\sum_{n \in \mathbb{Z}} (2n+1) q^{\frac{2k+1}{2}(n+\frac{2i-1}{4k+2})^2}
	(1+q^n)
\\	
&=
	\sum_{n_1,n_2,\dotsc,n_{k+1} \geq 0 \atop n_{k} \equiv n_{k+1} \mod 2} 
	    q^{ \frac{1}{2}\boldsymbol{n} \cdot A^{-1} \cdot \boldsymbol{n}^T 
	        -\frac{1}{2}(n_1+n_2+\dotsb+n_{k-i})
	        +\frac{1}{2}n_{k-i+1}+\frac{3}{2}n_{k-i+2}+\dotsb+\frac{2i-3}{2}n_{k-1} 
            + \frac{2i-1}{4}(n_k+n_{k+1}) }	
	\\[-5ex]&\qquad\qquad\qquad\quad
	\times
	\frac{ (-q)_{n_1+n_2+\dotsb + n_{k-1}+ \frac{n_{k}+n_{k+1}}{2}}}
	{(q)_{n_1} (q)_{n_2} \cdots (q)_{n_{k+1}}}
.
\end{align*}
\end{theorem}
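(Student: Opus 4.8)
The plan is to mimic exactly the proof of Theorem~\ref{NS-char-log}, replacing the input Proposition~\ref{PropGeneralShifted1/2} by the general $0\le i\le k$ case of Proposition~\ref{PropGeneralShifted1}. First I would rewrite that proposition in a ``per-$\ell$'' form: set
\begin{align*}
G_\ell(q) :=
	q^{-\frac{(2i-1)^2}{8(2k+1)}}\frac{(-q)_\infty}{(q)_\infty}
	\left(\sum_{n\ge|\ell|}-\sum_{n\le-|\ell|-1}\right)
	q^{(k+\frac12)(n+\frac{2i-1}{4k+2})^2}(1+q^n),
\end{align*}
and observe that multiplying out the $\frac{(2k\ell+i+\ell-\frac12)^2}{4k+2}$ term in Proposition~\ref{PropGeneralShifted1} shows $G_\ell(q)$ equals the multisum on its right-hand side times an explicit power of $q$, namely $q^{(k+\frac12)\ell^2+(i-\frac12)\ell}$ (one should double-check this constant by expanding $(2k\ell+i+\ell-\frac12)^2-(2i-1)^2/2$, but it is a one-line computation). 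The desired theorem then amounts to the identity $\sum_{\ell\in\Z}G_\ell(q)=$ (left-hand side of Theorem~\ref{log-Ramond2}), which is precisely Lemma~\ref{shift-help} applied with the quadratic form $\frac{2k+1}{2}(n+\frac{2i-1}{4k+2})^2$ after accounting for the extra $(1+q^n)$ factor.

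The combinatorial heart is the same quadratic-form bookkeeping already carried out in Theorem~\ref{NS-char-log}. The three block computations there give
\begin{align*}
\tfrac12(n_1,\dotsc,n_k,n_k+2\ell)\,A^{-1}\,(n_1,\dotsc,n_k,n_k+2\ell)^T
= \tfrac12 N_1^2+N_2^2+\dotsb+N_k^2+\ell N_1+2\ell(N_2+\dotsb+N_k)+(k+\tfrac12)\ell^2,
\end{align*}
and I would reuse this verbatim. What changes is the linear term in the exponent: in Proposition~\ref{PropGeneralShifted1} there is no extra $\tfrac12$ in the coefficient of $N_1$, and the $i=k$ reduction is replaced by a general-$i$ partial sum $-N_1-\dotsb-N_{k-i}$. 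So in the substitution $m_j=N_j$, one must check that
\begin{align*}
-\tfrac12(n_1+\dotsb+n_{k-i})+\tfrac12 n_{k-i+1}+\tfrac32 n_{k-i+2}+\dotsb+\tfrac{2i-3}{2}n_{k-1}+\tfrac{2i-1}{4}(n_k+n_{k+1})
\end{align*}
on the theorem side matches $-N_1-N_2-\dotsb-N_{k-i}+\ell N_1+2\ell(N_2+\dotsb+N_k)$ minus the $\ell$-dependent cross terms, after writing $n_{k+1}=n_k+2\ell$ and telescoping $N_j$'s into the $n_j$'s. Concretely, $-N_1-\dotsb-N_{k-i}=-\sum_{j}\min\{j,k-i\}n_j$, and one reconciles this with the stated half-integer coefficients using $(A^{-1})_{i,j}=\min\{2i-1,2j-1\}$; the $\tfrac{2i-1}{4}(n_k+n_{k+1})$ piece is exactly what the last two columns of $A^{-1}$ contribute. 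The $(-q)$-Pochhammer argument $N_1+\ell=n_1+\dotsb+n_{k-1}+\tfrac{n_k+n_{k+1}}{2}$ matches as before since $N_1=n_1+\dotsb+n_k$ and $\ell=\tfrac{n_{k+1}-n_k}{2}$.

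The one genuinely new feature is the factor $(1+q^n)$ on the left, which has no analogue in Theorem~\ref{NS-char-log}. I expect this to be the main obstacle, though a mild one: it is already built into Proposition~\ref{PropGeneralShifted1} (its right-hand side, the multisum, has no $(1+q^n)$, so the $(1+q^n)$ is absorbed on the theta side), and Lemma~\ref{shift-help} as stated produces $\sum_n(2n+1)q^{Q(n)}$ from $\sum_n\sgn(n)q^{Q(n)}+2\sum_{\ell\ge1}(\sum_{n\ge\ell}-\sum_{n\le-\ell-1})q^{Q(n)}$. I would therefore either state and use a cognate of Lemma~\ref{shift-help} with an extra $(1+q^{n})$ inserted uniformly in every theta sum (the proof is again ``near trivial'': $(2n+1)(1+q^n)$ is what one gets, and the $q^n$-twist commutes with the $\sgn$/partial-sum decomposition), or, equivalently, split $(1+q^n)=1+q^n$ and apply Lemma~\ref{shift-help} to the two quadratic forms $(k+\tfrac12)(n+\tfrac{2i-1}{4k+2})^2$ and $(k+\tfrac12)(n+\tfrac{2i-1}{4k+2})^2+n$ separately. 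Since the proof is otherwise a line-by-line transcription of that of Theorem~\ref{NS-char-log} with Proposition~\ref{PropGeneralShifted1} in place of Proposition~\ref{PropGeneralShifted1/2}, I would, as the authors do for Theorem~\ref{log-Ramond}, simply note that the argument is essentially identical and omit the repeated details.
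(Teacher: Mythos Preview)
Your proposal is correct and follows exactly the approach the paper takes: the authors explicitly omit the proof of Theorem~\ref{log-Ramond2}, noting it is ``near identical to that of Theorem~\ref{NS-char-log}'' with Proposition~\ref{PropGeneralShifted1} playing the role of Proposition~\ref{PropGeneralShifted1/2}. Your handling of the one new wrinkle---the $(1+q^n)$ factor---is also right, since Lemma~\ref{shift-help} is a pointwise identity for the coefficient $2n+1$ and therefore remains valid when every term is multiplied by any fixed function of $n$.
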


%\section{$q$-series identities from ``non-commutative'' Jacobi forms}
\section{further $q$-series identities}

As discussed in the introduction, identity (\ref{euler-p2}),
which can be obtained from the Fourier expansion of a meromorphic Jacobi form, gives an elegant 
expression for the Rogers false theta function.
In this section, we first present a generalization of that identity.
\begin{theorem} \label{euler-false-p} For $k\in\N$,
$$\frac{\sum_{n \in \mathbb{Z}} {\rm sgn}(n) q^{(k+1)n^2+kn}}{(q)^{2k}_\infty}
=\sum_{n_1,n_2,\dotsc,n_{2k-1} \geq 0} \frac{q^{\sum_{i=1}^{2k-2} n_i n_{i+1}+\sum_{i=1}^{2k-1} n_i}}{(q)_{n_1}^2 (q)_{n_2}^2 \cdots (q)_{n_{2k-1}}^2}.$$
\end{theorem}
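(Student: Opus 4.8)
The plan is to realize the right-hand side as an iterated Bailey-chain sum and then collapse it using the specialized iteration \eqref{EqBaileyChainSpecialized} together with Slater's pair $B(3)$ from \eqref{EqDefBaileyPairB3}. The first step is to recognize the combinatorial sum $\sum_{n_1,\dotsc,n_{2k-1}\ge0} q^{\sum n_in_{i+1}+\sum n_i}/\prod(q)_{n_i}^2$ as the constant term of a Fourier expansion of a product of meromorphic Jacobi forms, exactly paralleling \eqref{EqInverseJacobi} and \eqref{euler-p2} but with $2k-1$ variables arranged in a ``chain'' so that consecutive variables couple. Concretely, I would first establish a lemma (the analogue of the promised Lemma \ref{LemmaMultiSum1}) rewriting this multisum in a nested form
\[
\sum_{n_{2k-1}\ge n_{2k-2}\ge\dotsb\ge n_1\ge0} \frac{(\text{power of }q)\,\beta^{B3}_{n_1}}{(q)_{n_{2k-1}-n_{2k-2}}\dotsm(q)_{n_2-n_1}}
\]
(up to an overall Euler factor), since $\beta^{B3}_{n}=1/(q)_n$ supplies one of the two copies of $(q)_{n_1}$ in the denominator. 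The idea is that each application of the Bailey chain multiplies by a factor $(-1)^{n_j+n_{j+1}}q^{n_j(n_j+1)/2+\dots}$ and introduces one new summation variable with a $(q)_{n_{j+1}-n_j}$ in the denominator; after $2k-1$ iterations one lands on the $\alpha$-side, namely $(1-q)\sum_{n\ge0}(-1)^{n(2k-1)}q^{(2k-1)n(n+1)/2}\alpha^{B3}_n$, which by \eqref{EqDefBaileyPairB3} telescopes into a false/partial theta series.

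The second step is the bookkeeping that matches the quadratic form $\sum n_in_{i+1}+\sum n_i$ with the exponents produced by the chain. Reindexing the nested sum by partial sums $N_j=n_j+n_{j+1}+\dotsb$ turns $(q)_{n_{j+1}-n_j}$-type denominators into $(q)_{n_j}$-type denominators (as already done in passing from the displayed Bailey-lattice identity to \eqref{EqBaileyLemmaKIFoldLimit}), and the sum of the triangular numbers $\sum_j n_j(n_j+1)/2$ in the $N$-variables becomes a sum of squares $\sum N_j^2$ plus linear terms; the ``chain'' coupling $n_in_{i+1}$ on the product side is then precisely what one needs. I would also need to absorb the extra Euler factors: the combinatorial side has $\prod(q)_{n_i}^{-2}$ while the chain naturally produces $\prod(q)_{n_{j+1}-n_j}^{-1}$ and one $(q)_{n_1}^{-1}$ from $\beta^{B3}$, so a global factor of $(q)_\infty^{-(2k-1)}$ or thereabouts has to be tracked — note the stated identity has $(q)_\infty^{2k}$ in the denominator on the left, i.e. $2k$ Euler factors, one of which comes from the $\alpha$-side collapse and the rest from the reindexing, and reconciling this exact count is the crux of the computation.

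The final step is to evaluate the $\alpha$-side: plugging $\alpha^{B3}_n=(-1)^nq^{n(3n+1)/2}(1-q^{2n+1})/(1-q)$ into $(1-q)\sum_{n\ge0}(-1)^{n(2k-1)}q^{(2k-1)n(n+1)/2}\alpha^{B3}_n$ gives $\sum_{n\ge0}(-1)^{2kn}q^{(2k-1)n(n+1)/2+n(3n+1)/2}(1-q^{2n+1})$, and after combining the $q^{(2k-1)n(n+1)/2+n(3n+1)/2}$ and $q^{(2k-1)n(n+1)/2+n(3n+1)/2+2n+1}$ terms the signs cancel and one gets a sum of the shape $\sum_{n\in\Z}\sgn(n)q^{(k+1)n^2+kn}$, matching the numerator on the left. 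The main obstacle I anticipate is purely the exponent arithmetic in the second and third steps: keeping track of how the half-powers $n_j(n_j+1)/2$ from the $2k-1$ chain steps recombine — after the $N$-reindexing — into the clean form $\sum n_in_{i+1}+\sum n_i$ on one side and into $(k+1)n^2+kn$ on the other, and making sure the signs $(-1)^{n_1+\dotsb+n_k}$ from the chain are entirely killed by the $B(3)$ pair (they are, since $\beta^{B3}$ is sign-free and the $(-1)^{n(2k-1)}$ on the $\alpha$-side is handled by the $(1-q^{2n+1})$ telescoping). A secondary subtlety is justifying the interchange of summation/rearrangement needed to write the combinatorial side in nested Bailey-chain form in the first place, but this is routine given the positivity of all terms as formal power series in $q$.
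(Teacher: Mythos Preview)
Your proposal is correct and follows essentially the same route as the paper: Lemma \ref{LemmaMultiSum1} converts the multisum into exactly the nested form required by \eqref{EqBaileyChainSpecialized}, and applying the chain to the pair $B(3)$ collapses the $\alpha$-side to the false theta series (this is packaged as Theorem \ref{TheoremMultiSum}, of which Theorem \ref{euler-false-p} is the odd case $k\mapsto 2k-1$). The only point where the paper diverges from your sketch is in the proof of Lemma \ref{LemmaMultiSum1} itself, which is carried out by direct $q$-series manipulation---Euler's expansions of $(z)_\infty$ and $1/(z)_\infty$ together with the finite $q$-binomial theorem---rather than by extracting Fourier coefficients of a product of Jacobi forms.
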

We note that these series have an odd number of summation variables. 
Interestingly, with an even number of summation variables we get a family of  {\em modular} identities. 
\begin{theorem} \label{euler-modular-p}
For $k\in\N$, 
$$\frac{(q,q^{2k+2},q^{2k+3};q^{2k+3})}{(q)^{2k+1}_\infty}
=\sum_{n_1,n_2,\dotsc,n_{2k} \geq 0} \frac{q^{\sum_{i=1}^{2k-1} n_i n_{i+1}+\sum_{i=1}^{2k} n_i}}{(q)_{n_1}^2 (q)_{n_2}^2 \cdots (q)_{n_{2k}}^2}.$$
\end{theorem}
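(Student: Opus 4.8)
The plan is to recognize the right-hand side as an instance of the specialized Bailey chain \eqref{EqBaileyChainSpecialized} applied to the Bailey pair $(\alpha^{B3}_n,\beta^{B3}_n)$ of \eqref{EqDefBaileyPairB3}, but with a different number of iterations than in Theorem \ref{euler-false-p}. Concretely, I expect the multisum with $2k$ variables $n_1,\dotsc,n_{2k}$ and quadratic form $\sum n_i n_{i+1}$ to arise from a $2k$-fold (rather than $(2k-1)$-fold) iteration of Bailey's lemma along the chain, after the change of variables relating the ``staircase'' summation indices $m_k\ge m_{k-1}\ge\dotsb\ge m_1\ge0$ in \eqref{EqBaileyChainSpecialized} to the differences $n_j=m_j-m_{j-1}$. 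The point of Lemma \ref{LemmaMultiSum1} (stated later in the paper) is precisely to rewrite such a tridiagonal-quadratic-form multisum in the ``staircase'' shape needed to feed into \eqref{EqBaileyChainSpecialized}; I would invoke that lemma to convert the right-hand side of the theorem into $\sum_{n_k\ge\dotsb\ge n_1\ge0}\frac{(q)_{n_1}(-1)^{\sum n_j}q^{\sum n_j(n_j+1)/2}\beta^{B3}_{n_1}}{\prod (q)_{n_j-n_{j-1}}}$ with the number of iterations chosen to be $2k$, paralleling how Theorem \ref{euler-false-p} uses $2k-1$.

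Having matched the left side of \eqref{EqBaileyChainSpecialized}, the right side becomes $(1-q)\sum_{n\ge0}(-1)^{2kn}q^{kn(n+1)}\alpha^{B3}_n = (1-q)\sum_{n\ge0}q^{kn(n+1)}\cdot\frac{(-1)^n q^{n(3n+1)/2}(1-q^{2n+1})}{1-q}$. So up to collecting the Euler factors from the $\beta^{B3}$ terms (each $\beta^{B3}_{n_1}=1/(q)_{n_1}$ contributes, and the $(q)_{n_1}$ in the numerator of \eqref{EqBaileyChainSpecialized} cancels one copy, but the iteration of Bailey's lemma from $a=q$ produces $1/(q)_\infty$ factors — in total $2k+1$ powers of $(q)_\infty^{-1}$ matching the denominator in the theorem), the identity reduces to evaluating $\sum_{n\ge0}(-1)^n q^{kn(n+1)+n(3n+1)/2}(1-q^{2n+1})$ and showing it equals $(q,q^{2k+2},q^{2k+3};q^{2k+3})_\infty$. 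The exponent is $\frac{(2k+3)n^2 + (2k+1)n}{2}$, so I would write this telescoping-type sum over $n\ge0$ as a bilateral-looking sum: $\sum_{n\ge0}(-1)^n q^{((2k+3)n^2+(2k+1)n)/2} - \sum_{n\ge0}(-1)^n q^{((2k+3)n^2+(2k+5)n+2)/2}$, reindex the second sum by $n\mapsto -n-1$ to get $\sum_{n\le -1}(-1)^{n}q^{((2k+3)n^2+(2k+1)n)/2}$ (checking the sign and exponent match up, which they should since $(2k+3)(-n-1)^2+(2k+1)(-n-1) = (2k+3)n^2+(2k+5)n+2$ and $(-1)^{-n-1}=(-1)^{n+1}$), and thereby assemble a full bilateral sum $\sum_{n\in\Z}(-1)^n q^{((2k+3)n^2+(2k+1)n)/2}$, which is a Jacobi triple product equal to $(q,q^{2k+2},q^{2k+3};q^{2k+3})_\infty$.

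The reindexing step is the one requiring the most care: I must confirm that the boundary term at $n=0$ is handled correctly (no double-counting or omission) and that the sign $(-1)^n$ and the half-integer exponent are exactly compatible between the ``$n\ge0$ with $+$'' piece and the ``$n\ge0$ with $-$'' piece after substitution. The contrast with Theorem \ref{euler-false-p} is instructive: there the parity $(-1)^{(2k-1)n}=(-1)^n$ survives on the right side of \eqref{EqBaileyChainSpecialized}, producing an alternating-sign sum over $n\ge0$ that, after the same manipulation, gives the \emph{false} theta $\sum\mathrm{sgn}(n)q^{\dots}$ (the two halves add rather than cancel a boundary term), whereas here the extra iteration flips the parity to $(-1)^{2kn}=1$ on the right side and the same manipulation instead produces a genuine bilateral theta series. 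So the only genuinely new content beyond the bookkeeping is verifying this parity flip and that it converts ``sgn'' into a triple product; everything else is the by-now-standard Bailey chain computation, and Lemma \ref{LemmaMultiSum1} does the heavy lifting of putting the multisum into the right form.
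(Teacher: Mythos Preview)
Your proposal is correct and follows essentially the same route as the paper: apply Lemma~\ref{LemmaMultiSum1} (with $k\mapsto 2k$) to put the multisum in staircase form, feed it into \eqref{EqBaileyChainSpecialized} with the Bailey pair $B(3)$, and then reindex the resulting $\sum_{n\ge0}(-1)^n q^{((2k+3)n^2+(2k+1)n)/2}(1-q^{2n+1})$ into a bilateral sum evaluated by the Jacobi triple product. One small clarification: the factor $(q)_\infty^{-(2k+1)}$ comes directly from Lemma~\ref{LemmaMultiSum1}, not from the Bailey chain iteration itself---\eqref{EqBaileyChainSpecialized} as stated has no infinite-product prefactor.
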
 
We should point out that Theorem \ref{euler-modular-p} was first conjectured in \cite{CS} based on
an analysis of Schur's indices in certain Argyres-Douglas theories in physics.
We shall present a uniform proof of both theorems. We do so with
a combination of $q$-series techniques and Bailey's lemma.

\begin{lemma}\label{LemmaMultiSum1}For $k\in\N$,
\begin{align*}
&\sum_{n_1,n_2,\dotsc,n_k\ge0}
\frac{q^{n_1n_2 + n_2n_3 + \dotsb + n_{k-1}n_k + n_1+n_2+\dotsb+n_k}}
	{(q)_{n_1}^2(q)_{n_2}^2\dotsm (q)_{n_k}^2}
\\
&=
	\frac{1}{(q)_\infty^{k+1}}
	\sum_{m_1,m_2,\dotsc,m_k\ge0}
	\frac{(-1)^{m_1+m_2+\dotsb+m_k} 
		q^{\frac{m_1(m_1+1)}{2}+\frac{m_2(m_2+1)}{2}+\dotsb+\frac{m_k(m_k+1)}{2}} }
	{(q)_{m_1-m_2}(q)_{m_2-m_3}\dotsm (q)_{m_{k-1}-m_k}}
.
\end{align*}
\end{lemma}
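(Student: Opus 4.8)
The plan is to prove Lemma \ref{LemmaMultiSum1} by induction on $k$, peeling off one summation variable at a time and absorbing it via the $q$-binomial theorem (equivalently, Euler's identities for $\frac{1}{(q)_n^2}$-type sums). The right-hand side is already in the shape of a $k$-fold nested sum whose innermost structure matches the specialized Bailey chain \eqref{EqBaileyChainSpecialized} applied to the Bailey pair $(\alpha_n^{B3},\beta_n^{B3})$ of \eqref{EqDefBaileyPairB3}, so a cleaner route may be to recognize the right-hand side directly: with $\beta_{n_1}^{B3}=\frac{1}{(q)_{n_1}}$ and $\alpha_n^{B3}$ as in \eqref{EqDefBaileyPairB3}, the left side of \eqref{EqBaileyChainSpecialized} is (after reindexing $m_j = n_j + n_{j+1} + \dots + n_k$ exactly as done between the displayed Bailey-lattice formulas in Section 2) a sum of the form appearing on the left-hand side of the lemma but \emph{without} the squared $(q)$-factors. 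The key realignment is therefore a ``fattening'' of each $(q)_{n_j}$ into $(q)_{n_j}^2$, which is precisely what an extra application of the $q$-binomial theorem in each variable accomplishes. So I would set up the argument as: first rewrite the claimed right-hand side using \eqref{EqBaileyChainSpecialized} to turn it into $(1-q)(q)_\infty^{-1}\sum_{n\ge0}(-1)^{nk}q^{kn(n+1)/2}\alpha_n$ for an appropriate Bailey pair, then identify that Bailey pair.

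The main obstacle is getting the quadratic form right: the left-hand side of the lemma has the \emph{path-graph} quadratic form $\sum n_i n_{i+1} + \sum n_i$ in the exponent (an $A_{k}$-type tridiagonal form with zero diagonal after the linear shift), whereas the iterated Bailey chain \eqref{EqBaileyChainSpecialized} naturally produces the \emph{staircase} form $\sum_{j}\binom{n_j+1}{2}$ in the ordered variables $n_1\le\dots\le n_k$, which becomes $\sum_j \binom{m_j - m_{j+1} + 1}{2}$-type data under the reindexing. Reconciling these two requires either (i) a change of summation variables turning the path form into the staircase form — the standard device here is $n_j \mapsto$ (partial sums), which sends $\sum n_i n_{i+1}$ to a staircase — combined with the identity $\sum_{i}\frac{q^{\binom{n}{2}+n(j)}}{(q)_n(q)_{?}}$ collapsing via the $q$-Chu–Vandermonde sum, or (ii) iterating the one-variable identity $\sum_{n\ge0}\frac{q^{n^2+n+mn}}{(q)_n^2}=\frac{1}{(q)_\infty}\sum_{j\ge0}\frac{(-1)^j q^{\binom{j+1}{2}+mj}}{(q)_{?}}$ carefully. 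I expect to use approach (ii): treat the innermost variable $n_k$, apply a known transformation to replace $\frac{1}{(q)_{n_k}^2}$-weighted sum with $q^{n_{k-1}n_k}$ by something with a single $(q)$-factor and a new variable $m_k$, and repeat; bookkeeping the interaction terms $n_{k-1}n_k$ through the transformation is where the care is needed.

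Concretely, first I would establish the base case $k=1$, which is the classical identity
$$\sum_{n\ge0}\frac{q^{n^2+n}}{(q)_n^2}=\frac{1}{(q)_\infty^2}\sum_{m\ge0}(-1)^m q^{\binom{m+1}{2}},$$
a direct consequence of Ramanujan's identity \eqref{Ramanujan} together with the Jacobi triple product rewriting of $\sum_{m\ge0}(-1)^m q^{\binom{m+1}{2}}$; note $n^2+n = \binom{n+1}{2}\cdot$(adjust), and the left side here is $\sum q^{n_1^2+n_1}/(q)_{n_1}^2$, matching the $k=1$ instance of the lemma after checking the exponent $\frac12 m_1(m_1+1)=\binom{m_1+1}{2}$. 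For the inductive step, assuming the formula for $k-1$, I would isolate the $n_1$-sum (the ``end'' of the path), writing the left-hand side as $\sum_{n_2,\dots,n_k}(\cdots)\sum_{n_1\ge0}\frac{q^{n_1^2+n_1+n_1 n_2}}{(q)_{n_1}^2}$ — wait, the lemma's exponent is $n_1 n_2 + \dots$, \emph{linear} in each $n_i$ except for the cross terms, with no $n_i^2$; so after the partial-sum substitution $n_i \to N_i$ the diagonal terms appear. The cleanest bookkeeping is: substitute to make the form a staircase, apply \eqref{EqBaileyChainSpecialized} with the Bailey pair $(\alpha^{B3},\beta^{B3})$ to collapse the $\beta$-side, obtaining $(1-q)(q)_\infty^{-1}\sum_{n}(-1)^{nk}q^{kn(n+1)/2}\frac{(-1)^n q^{n(3n+1)/2}(1-q^{2n+1})}{1-q}$, and recognize this as $(q)_\infty^{-1}$ times (via Jacobi triple product / Bailey–Daum or just telescoping the $(1-q^{2n+1})$ factor against a symmetrization $n\mapsto -n-1$) precisely the desired $(q)_\infty^{-k}\sum_{\mathbf m}(\cdots)$ after undoing the substitution on the $\alpha$-side count. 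The one genuinely delicate point, which I would flag, is verifying that the reindexing $m_j\mapsto N_j$ converts the staircase form on the left of \eqref{EqBaileyChainSpecialized} into exactly the path form $\sum n_i n_{i+1}+\sum n_i$ on the left of the lemma — this is a finite linear-algebra check on the exponents and is the crux of the matching; once it is done, the lemma is immediate.
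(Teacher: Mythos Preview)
Your proposal has a genuine gap. The crux you identify---``verifying that the reindexing $m_j\mapsto N_j$ converts the staircase form on the left of \eqref{EqBaileyChainSpecialized} into exactly the path form $\sum n_in_{i+1}+\sum n_i$ on the left of the lemma''---is \emph{not} a finite linear-algebra check. The left side of \eqref{EqBaileyChainSpecialized}, with $\beta_{n_1}=1/(q)_{n_1}$, has only \emph{single} factors $(q)_{m_j-m_{j+1}}$ in the denominator (and the numerator factor $(q)_{n_1}\beta_{n_1}$ cancels), whereas the left-hand side of the lemma has \emph{squared} factors $(q)_{n_j}^2$. No change of summation variables can convert one structure into the other; you need an honest $q$-series transformation here, and your proposal never supplies one. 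The ``fattening'' you allude to is the entire content of the lemma, not a bookkeeping step. (Relatedly, you misstate the $k=1$ case: the lemma's left-hand side is $\sum_{n\ge0}q^{n}/(q)_n^2$, with no $n^2$ in the exponent---this is \eqref{euler-p2}, not \eqref{Ramanujan}.)

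You are also conflating the lemma with Theorem~\ref{TheoremMultiSum}. The Bailey chain \eqref{EqBaileyChainSpecialized} applied to $(\alpha^{B3},\beta^{B3})$ evaluates the \emph{right-hand side} of the lemma as a single series in $n$; that is exactly how the paper proves Theorem~\ref{TheoremMultiSum} \emph{after} the lemma is in hand. It does not help you prove the lemma itself. The paper's proof of the lemma goes a different route: it inserts extra parameters $x_1,\dots,x_k$, multiplies through by $(x_1q,\dots,x_kq)_\infty$, expands each $(x_jq^{n_j+1})_\infty$ by Euler's identity to introduce the $m_j$, shifts $m_j\mapsto m_j-n_{j-1}$, and then collapses the resulting inner $n_j$-sums using the finite $q$-binomial theorem $\sum_{n=0}^m(-1)^n z^n q^{\binom{n}{2}}\binom{m}{n}_q=(z)_m$ (for $j<k$) and Euler's $\sum_{n\ge0}z^n/(q)_n=1/(z)_\infty$ (for $j=k$). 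That Euler-expand-then-collapse mechanism is precisely what turns $(q)_{n_j}^2$ into $(q)_{m_{j-1}-m_j}$, and it is absent from your plan.
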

\begin{proof}
We prove the stronger result that 
\begin{align}\label{EqLemmaMultiSum1BigId}
&\sum_{n_1,n_2,\dotsc,n_k\ge0}
\frac{ x_2^{n_1}x_3^{n_2}\dotsm x_k^{n_{k-1}} q^{n_1n_2+n_2n_3+\dotsb + n_{k-1}n_k + n_1+n_2+\dotsb+n_k} }
	{ (q,x_1q)_{n_1} (q,x_2q)_{n_2} \dotsm (q,x_kq)_{n_k} }
\nonumber\\
&=
	\frac{1}{(q)_\infty}
	\sum_{m_1,m_2,\dotsc, m_k\ge0}
	\frac{(-1)^{m_1+m_2+\dotsb+m_k}x_1^{m_1}x_2^{m_2}\dotsm x_k^{m_k} 
		q^{\frac{m_1(m_1+1)}{2}+\frac{m_2(m_2+1)}{2}+\dotsb+\frac{m_k(m_k+1)}{2}}	
	}{(q)_{m_1-m_2}(q)_{m_2-m_3}\dotsm(q)_{m_{k-1}-m_k} (x_1q,x_2q,\dotsc,x_kq)_\infty}	
	.
\end{align}
The lemma then follows from by setting $x_1=x_2=\dotsb=x_k=1$. Multiplying the
left-hand side of \eqref{EqLemmaMultiSum1BigId} by $(x_1q,x_2q,\dotsc,x_kq)_\infty$
gives
\begin{align}\label{EqLemmaMultiSum1LongId}
&\sum_{n_1,n_2,\dotsc,n_k\ge0}
	\frac{ x_2^{n_1}x_3^{n_2}\dotsm x_k^{n_{k-1}} 
		q^{n_1n_2+n_2n_3+\dotsb + n_{k-1}n_k + n_1+n_2+\dotsb+n_k} 
		(x_1q^{n_1+1},x_2q^{n_2+1},\dotsc,x_kq^{n_k+1})_\infty
	}{ (q)_{n_1} (q)_{n_2} \dotsm (q)_{n_k} }
\nonumber\\
&=
\sum_{\substack{ n_1,n_2,\dotsc,n_k\ge0\\m_1,m_2,\dotsc,m_k\ge0  }}
	\frac{ (-1)^{m_1+m_2+\dotsb+m_k}
		x_1^{m_1}x_2^{m_2+n_1}x_3^{m_3+n_2}\dotsm x_k^{m_k+n_{k-1}} 
		q^{n_1n_2+n_2n_3+\dotsb + n_{k-1}n_k }
	}{ (q)_{n_1} (q)_{n_2} \dotsm (q)_{n_k}(q)_{m_1} (q)_{m_2} \dotsm (q)_{m_k} }
	\nonumber\\&\qquad\qquad\qquad\times\!
		q^{\frac{m_1(m_1+1)}{2}+\frac{m_2(m_2+1)}{2}+\dotsb+\frac{m_k(m_k+1)}{2}
			+ n_1(m_1+1) + n_2(m_2+1) + \dotsb + n_k(m_k+1) 
		}
\nonumber\\
&=
\sum_{\substack{ n_1,n_2,\dotsc,n_k\ge0\\m_1,m_2,\dotsc,m_k\ge0  }}
	\frac{ (-1)^{m_1+m_2+\dotsb+m_k+n_1+n_2+\dotsb+n_{k-1}}
		x_1^{m_1}x_2^{m_2}\dotsm x_k^{m_k} 
		q^{\frac{m_1(m_1+1)}{2}+\frac{m_2(m_2+1)}{2}+\dotsb+\frac{m_k(m_k+1)}{2}}
	}{ (q)_{n_1} (q)_{n_2} \dotsm (q)_{n_k}(q)_{m_1} (q)_{m_2-n_1}(q)_{m_3-n_2} \dotsm (q)_{m_k-n_{k-1}} }
	\nonumber\\&\qquad\qquad\qquad\times\!
		q^{ \frac{n_1(n_1+1)}{2}+\frac{n_2(n_2+1)}{2}+\dotsb+\frac{n_{k-1}(n_{k-1}+1)}{2}
			+ n_{k} + m_1n_1 + m_2(n_2-n_1) + m_3(n_3-n_2) + \dotsb + m_{k}(n_k-m_{k-1}) 
		}
,
\end{align}
where in the first equality we have expanded each $(x_jq)_\infty$ according to 
an identity of Euler,
\begin{gather*}
(z;q)_\infty
=
\sum_{m\ge0} \frac{(-1)^m z^m q^{\frac{m(m-1)}{2}}}{(q)_m}
,
\end{gather*}
and in the second equality we have shifted the indices $m_j\mapsto m_j-n_{j-1}$
for $j\ge2$ (since $(q)_n^{-1}=0$ when $n$ is negative, the index bounds are 
still valid). 

We recall a certain finite analogue of the $q$-binomial theorem \cite[Theorem 3.3]{Andrews2} states that
\begin{gather*}
\sum_{n=0}^m \frac{(-1)^n z^n q^{\frac{n(n-1)}{2}} (q)_m }{(q)_n (q)_{m-n}} 
= (z)_m.
\end{gather*}
As such the inner sums on $n_j$ in the final equality of 
\eqref{EqLemmaMultiSum1LongId}, for $1\le j<k$, are
\begin{gather*}
\sum_{n_j\ge0  }
	\frac{ (-1)^{n_j} q^{ n_j(m_j-m_{j+1}+1) + \frac{n_j(n_j-1)}{2}} }
	{ (q)_{n_j}(q)_{m_{j+1}-n_j}}
=
	\frac{ (q^{m_j-m_{j+1}+1})_{m_{j+1}} }{(q)_{m_{j+1}}}
=
	\frac{(q)_{m_j}}{(q)_{m_{j+1}}(q)_{m_j-m_{j+1}}}
.
\end{gather*}
The inner sum on $n_k$, by another identity of Euler, is instead
\begin{gather*}
\sum_{n_k\ge0} \frac{q^{n_k(m_k+1)}}{(q)_{n_k}}
=
	\frac{1}{(q^{m_k+1})_\infty}
.
\end{gather*}
Equation \eqref{EqLemmaMultiSum1BigId} then follows by plugging these
back into \eqref{EqLemmaMultiSum1LongId} and performing some elementary rearrangements
of the summands. 
\end{proof}

We are now in a position to state and prove the general identity that encapsulates
both Theorem \ref{euler-false-p} and Theorem \ref{euler-modular-p}.

\begin{theorem}\label{TheoremMultiSum} For $k\in\N$,
\begin{align*}
&\sum_{n_1,n_2,\dotsc,n_k\ge0}
\frac{q^{n_1n_2 + n_2n_3 + \dotsb + n_{k-1}n_k + n_1+n_2+\dotsb+n_k}}
	{(q)_{n_1}^2(q)_{n_2}^2\dotsm (q)_{n_k}^2}
\\
&=
	\frac{1}{(q)_\infty^{k+1}}
	\left(\sum_{n\ge0}+(-1)^{k}\sum_{n<0}\right)
	(-1)^{n(k+1)} q^{ \frac{(k+3)n^2+(k+1)n}{2}}	
.
\end{align*}
\end{theorem}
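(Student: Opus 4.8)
The plan is to reduce the multisum on the left to the iterated Bailey chain of \eqref{EqBaileyChainSpecialized} by first applying Lemma~\ref{LemmaMultiSum1}, and then evaluating the resulting sum with a known Bailey pair. First I would invoke Lemma~\ref{LemmaMultiSum1} to rewrite the left-hand side as
\[
\frac{1}{(q)_\infty^{k+1}}
\sum_{m_1\ge m_2\ge\dotsb\ge m_k\ge0}
\frac{(-1)^{m_1+\dotsb+m_k}\,q^{\frac{m_1(m_1+1)}{2}+\dotsb+\frac{m_k(m_k+1)}{2}}}
{(q)_{m_1-m_2}(q)_{m_2-m_3}\dotsm(q)_{m_{k-1}-m_k}}.
\]
The inner sum is, up to the index conventions, exactly the left-hand side of the $k$-fold Bailey chain \eqref{EqBaileyChainSpecialized} applied with the ``unit'' Bailey pair: take $\beta_{n_1}=\delta_{n_1,0}$ so that $(q)_{n_1}\beta_{n_1}$ contributes only the $n_1=0$ term. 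Concretely, the Bailey pair relative to $(q,q)$ with $\beta_n=\delta_{n,0}$ has $\alpha_n$ determined by the defining relation; since $\beta_n=\sum_j \alpha_j/((q)_{n-j}(q^2)_{n+j})$, one finds $\alpha_n=(-1)^n q^{n(n-1)/2}(1-q^{2n+1})/(1-q)$ — this is precisely the Bailey pair $\alpha'_n,\beta'_n$ of \eqref{Eq:GeneralBaileyPairLDef} specialized to $a=q$, or equivalently it can be extracted from $B(3)$ in \eqref{EqDefBaileyPairB3} after adjusting for the extra $(q)_{n_1}^{-1}$ versus the $(q)_{n_1}$ appearing in \eqref{EqBaileyChainSpecialized}. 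I would double-check this identification carefully, since the exponent $q^{\frac{n_1(n_1+1)}{2}}$ and the factor $(q)_{n_1}$ in the chain's summand must combine with $\beta_{n_1}$ in exactly the right way; the cleanest route is probably to use the Bailey pair $(\alpha'_n,\beta'_n)$ with $a=q$ directly, since then $\beta'_n=1/(q,q)_n=1/(q)_n^2$ does not quite match, so instead the relevant choice is the pair with $\beta_n=\delta_{n,0}$, whose $\alpha_n$ I will verify satisfies $\alpha_0=1$ and $\alpha_n = (-1)^n(1-q^{2n+1})q^{n(n-1)/2}/(1-q)$ for $n\ge1$ by the finite $q$-binomial inversion.

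Granting that identification, the right-hand side of \eqref{EqBaileyChainSpecialized} becomes
\[
(1-q)\sum_{n\ge0}(-1)^{nk}q^{\frac{kn(n+1)}{2}}\alpha_n
=\sum_{n\ge0}(-1)^{nk}(-1)^n(1-q^{2n+1})q^{\frac{kn(n+1)}{2}+\frac{n(n-1)}{2}}
+ (\text{the }n=0\text{ term handled separately}).
\]
Combining $\frac{kn(n+1)}{2}+\frac{n(n-1)}{2}=\frac{(k+1)n^2+(k-1)n}{2}$ and distributing the factor $(1-q^{2n+1})=1-q^{2n+1}$ splits the sum into two telescoping-style pieces. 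In the piece coming from $1$ the exponent is $\frac{(k+1)n^2+(k-1)n}{2}$, and in the piece coming from $-q^{2n+1}$ the exponent is $\frac{(k+1)n^2+(k-1)n}{2}+2n+1=\frac{(k+1)n^2+(k+3)n+2}{2}=\frac{(k+1)(n+1)^2+(k-1)(n+1)}{2}\cdot(\text{after reindexing})$ — more precisely, substituting $n\mapsto -n-1$ in the first piece reproduces the second piece with the correct sign $(-1)^k$, which is exactly the asymmetric ``$\sum_{n\ge0}+(-1)^k\sum_{n<0}$'' structure in the claimed statement. I would carry out this reindexing carefully to confirm the exponent $\frac{(k+3)n^2+(k+1)n}{2}$ in the theorem: note that $\frac{(k+1)n^2+(k-1)n}{2}$ is \emph{not} literally what appears, so I expect the matching requires pairing the two halves and re-expressing, i.e. the final clean form with $(k+3)$ in front of $n^2$ emerges only after the $(1-q^{2n+1})$ split and the $n\mapsto-n-1$ folding are done together, not from either half alone.

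\textbf{Main obstacle.} The delicate point is the bookkeeping in the last step: getting the signs $(-1)^{n(k+1)}$ and the sign split $\sum_{n\ge0}+(-1)^k\sum_{n<0}$ to come out exactly right, and confirming the quadratic exponent $\tfrac{(k+3)n^2+(k+1)n}{2}$ after folding the $1$-part and the $q^{2n+1}$-part of $(1-q^{2n+1})\alpha_n$ into a single bilateral-looking sum. A secondary subtlety is justifying the interchange of summation when applying Lemma~\ref{LemmaMultiSum1} and \eqref{EqBaileyChainSpecialized} — both are formal identities in $q$-series that converge for $|q|<1$, so this is routine but should be noted. Once the exponent and signs are checked, Theorem~\ref{euler-false-p} follows by taking $k\mapsto 2k-1$ (odd case, where $(-1)^k=-1$ turns the bilateral sum into $\sum_{n\in\mathbb Z}\mathrm{sgn}(n)(\dotsb)$ up to the $n=0$ adjustment, matching a false theta series), and Theorem~\ref{euler-modular-p} follows by taking $k\mapsto 2k$ (even case, where $(-1)^k=+1$ gives a genuine bilateral theta sum, which is then identified with the triple product $(q,q^{2k+2},q^{2k+3};q^{2k+3})$ via the Jacobi triple product identity); I would spell out both specializations, being careful with the $n=0$ term in the odd case.
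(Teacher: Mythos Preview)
Your overall strategy --- apply Lemma~\ref{LemmaMultiSum1}, feed the resulting sum into the $k$-fold Bailey chain \eqref{EqBaileyChainSpecialized}, then split the factor $(1-q^{2n+1})$ and fold $n\mapsto -n-1$ --- is exactly the paper's proof. The gap is in the choice of Bailey pair.

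Look again at the inner sum produced by Lemma~\ref{LemmaMultiSum1}:
\[
\sum_{m_1\ge\dotsb\ge m_k\ge0}
\frac{(-1)^{m_1+\dotsb+m_k}\,q^{\frac{m_1(m_1+1)}{2}+\dotsb+\frac{m_k(m_k+1)}{2}}}
{(q)_{m_1-m_2}\dotsm(q)_{m_{k-1}-m_k}}.
\]
Matching this against the left side of \eqref{EqBaileyChainSpecialized} (with $n_j\leftrightarrow m_{k+1-j}$, so the innermost variable is $n_1=m_k$) requires $(q)_{n_1}\beta_{n_1}=1$ for \emph{every} $n_1\ge0$, i.e.\ $\beta_{n_1}=1/(q)_{n_1}$. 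That is precisely the Bailey pair $B(3)$ of \eqref{EqDefBaileyPairB3}, whose $\alpha$-side carries the exponent $q^{n(3n+1)/2}$. Your choice $\beta_{n}=\delta_{n,0}$ instead forces $n_1=0$, collapsing the chain to a $(k-1)$-fold sum that no longer matches the Lemma's output; and the alternative $(\alpha'_n,\beta'_n)$ with $a=q$ gives $\beta'_n=1/(q)_n^2$, which introduces an unwanted extra $1/(q)_{n_1}$.

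This misidentification is exactly why your exponent comes out as $\tfrac{(k+1)n^2+(k-1)n}{2}$ rather than $\tfrac{(k+3)n^2+(k+1)n}{2}$: the difference $\tfrac{n(3n+1)}{2}-\tfrac{n(n-1)}{2}=n^2+n$ is the gap between the $B(3)$ $\alpha$-exponent and the unit-pair $\alpha$-exponent. No amount of folding will repair this; the $n\mapsto -n-1$ substitution preserves the quadratic form $\tfrac{(k+1)n^2+(k-1)n}{2}$ and cannot convert it to $\tfrac{(k+3)n^2+(k+1)n}{2}$. Once you use $B(3)$, the Bailey chain gives
\[
\sum_{n\ge0}(-1)^{n(k+1)}q^{\frac{(k+3)n^2+(k+1)n}{2}}(1-q^{2n+1})
\]
directly, and then your folding argument (which is correct) finishes the proof.
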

\begin{proof}
By applying the version of Bailey's lemma stated in \eqref{EqBaileyChainSpecialized}
to the Bailey pair defined in \eqref{EqDefBaileyPairB3},
we find that Lemma \ref{LemmaMultiSum1} implies
\begin{align*}
&\sum_{n_1,n_2,\dotsc,n_k\ge0}
\frac{q^{n_1n_2 + n_2n_3 + \dotsb + n_{k-1}n_k + n_1+n_2+\dotsb+n_k}}
	{(q)_{n_1}^2(q)_{n_2}^2\dotsm (q)_{n_k}^2}
=
	\frac{1}{(q)_\infty^{k+1}}
	\sum_{n\ge0}
	(-1)^{n(k+1)} q^{\frac{(k+3)n^2+(k+1)n}{2}}(1-q^{2n+1})
\\
&=
	\frac{1}{(q)_\infty^{k+1}}
	\left(\sum_{n\ge0}+(-1)^k\sum_{n<0}\right)
	(-1)^{n(k+1)} q^{\frac{(k+3)n^2+(k+1)n}{2}}
.
\end{align*}
\end{proof}

We find that Theorem \ref{TheoremMultiSum} establishes Thereom \ref{euler-false-p} and \ref{euler-modular-p}.
In particular, Theorem \ref{euler-false-p} immediately follows from the statement of the theorem by letting 
$k\mapsto 2k-1$. For Theorem
\ref{euler-modular-p}, we let $k\mapsto 2k$ in Theorem \ref{TheoremMultiSum} and sum the series
by using the Jacobi triple product identity to find that
\begin{align*}
\sum_{n\in\Z} (-1)^{n} q^{ \frac{(2k+3)n^2+(2k+1)n}{2}}	
&=
\sum_{n\in\Z} (-1)^{n} q^{ \frac{(2k+3)n^2-(2k+1)n}{2}}	
=
	(q,q^{2k+2},q^{2k+3};q^{2k+3})_\infty
.
\end{align*}

%\begin{proposition}
%The conjecture holds for $k=1$.
%\end{proposition}

\section{Classical Dilogarithm identities}

It is known that modular $q$-hypergeometric identities  give rise to identities for the Rogers dilogarithm function  
\cite{Kirillov,Nahm1,Nahm2,Zagier}; see also \cite{VZ}. This is the case with characters of modules of rational 
vertex algebras (or conformal field theories) as they can be often expressed as $n$-fold $q$-hypergeometric series. 
%As far as we know {\em non-modular} $q$-hypergeometric identities have not been studied from this perspective. 

Let $A=(A_{i,j})$ be a symmetric positive definite $k \times k$ matrix with rational entries, $B \in \mathbb{Q}^k$, $C \in \mathbb{Q}$, and 
$$F_{A,B,C} (q)
:=
\sum_{{\boldsymbol{n}}=(n_1,n_2,\dotsc,n_k) \geq 0} 
\frac{q^{\frac{1}{2} \boldsymbol{n}^T A \boldsymbol{n}+B{\boldsymbol{n}}+C}}{(q)_{n_1}(q)_{n_2} \cdots (q)_{n_k}}.$$
This is an example of a multi $q$-hypergeometric series sometimes called a {\em Nahm sum}.
According to \cite{KKMM,Kirillov, Nahm1,Nahm2}, using the saddle point method, one can show that 
its asymptotic expansion (as $t \to 0^+$, $q=e^{-t}$) is given by:
\begin{gather*}
F_{A,B,C}(e^{-t}) e^{-\alpha/t} \sim \beta e^{-\gamma t}(1 + O(t))
,\\
\alpha:=\sum_{i=1}^k (L(1)-L(Q_i)),
\end{gather*}
where 
$${\rm L}(x):={\rm Li}_2(x)+\frac{1}{2}\log(x)\log(1-x) , \ x \in (0,1),$$
denotes Rogers' dilogarithm function and  $${\rm Li}_2(x)=\sum_{n \geq 1} \frac{x^n}{n^2}$$ is the classical dilogarithm \footnote{$L(x)$ can be  extended to $x=0$ and $x=1$; $L(0)=0$, $L(1)=\frac{\pi^2}{6}$}, and 
$Q_i  \in (0,1)$ are unique solutions of the TBA
(thermodynamic Bethe ansatz) equation 
\begin{equation} \label{TBA-org}
(1-Q_i)=\prod_{j=1}^k Q_j^{A_{i,j}}.
\end{equation}
A detailed treatment of this result can be found in 
M. Vlasenko and S. Zwegers's recent paper \cite{VZ}.

We would like to extend this idea to our false theta function identities. 

\begin{lemma} Let $A$ be a positive definite real symmetric $k \times k$ matrix. Then
\begin{equation} \label{TBA-org-log}
(1-Q_k)^2=\prod_{j=1}^k Q_j^{A_{k,j}},    \ \ (1-Q_i)=\prod_{j=1}^k Q_j^{A_{i,j}},  \ 1 \leq i \leq k-1
\end{equation}
has a unique solution inside $(0,1)$.
\end{lemma}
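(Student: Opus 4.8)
The plan is to reduce the modified TBA system \eqref{TBA-org-log} to a fixed-point problem for a contraction-like map on the cube $(0,1)^k$, mirroring the standard argument for \eqref{TBA-org} but accounting for the squared factor in the $k$-th equation. First I would take logarithms: writing $x_i=-\log Q_i\in(0,\infty)$, the system becomes $-\log(1-e^{-x_i})=\sum_j A_{i,j}x_j$ for $1\le i\le k-1$ and $-2\log(1-e^{-x_k})=\sum_j A_{k,j}x_j$. Equivalently, with the diagonal matrix $D=\operatorname{diag}(1,\dots,1,2)$ and $\widetilde A=D^{-1}A$ (which is no longer symmetric but still has positive diagonal and the same sign pattern), the system reads $\widetilde A\,\boldsymbol x = -D^{-1}\boldsymbol{\log}(1-e^{-\boldsymbol x})$, i.e. $\boldsymbol x = \widetilde A^{-1} D^{-1} \boldsymbol\phi(\boldsymbol x)$ where $\phi_i(x)=-\log(1-e^{-x})$ maps $(0,\infty)$ monotonically decreasingly onto $(0,\infty)$.

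The cleanest route to existence and uniqueness is variational, exactly as in \cite{VZ} and \cite{Nahm1,Nahm2}: I would exhibit the solutions as the critical point of a strictly convex potential. Consider, for $\boldsymbol Q\in(0,1)^k$, the function
\begin{equation*}
V(\boldsymbol Q) = \tfrac12 \sum_{i,j} A_{i,j}\,\Lambda(Q_i,Q_j) - \sum_{i=1}^{k-1} L(1-Q_i) - 2\,\widetilde L(1-Q_k),
\end{equation*}
where one chooses the building blocks so that $\partial V/\partial Q_i = 0$ reproduces \eqref{TBA-org-log}; concretely it is cleaner to work in the variables $x_i=-\log Q_i$ and set
\begin{equation*}
\Phi(\boldsymbol x) = \tfrac12\,\boldsymbol x^T A\,\boldsymbol x + \sum_{i=1}^{k-1}\psi(x_i) + 2\,\psi(x_k),\qquad \psi(x):=\int_0^{x}\log(1-e^{-t})\,dt = -\operatorname{Li}_2(e^{-x}),
\end{equation*}
so that $\nabla\Phi(\boldsymbol x) = A\boldsymbol x + D\,\boldsymbol{\log}(1-e^{-\boldsymbol x})$, and $\nabla\Phi=0$ is precisely \eqref{TBA-org-log}. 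Since $A$ is positive definite and $\psi$ is convex on $(0,\infty)$ (indeed $\psi''(x)=e^{-x}/(1-e^{-x})>0$) and $D$ has positive entries, $\Phi$ is strictly convex on the open orthant; hence it has at most one critical point. For existence I would check that $\Phi$ is coercive and blows up at the boundary: as some $x_i\downarrow 0$ we have $\psi(x_i)\to\psi(0)$ finite but $\nabla\Phi$ points inward because $\log(1-e^{-x_i})\to-\infty$ forces the gradient's $i$-th component to $-\infty$, so no critical point escapes to the boundary; and as $|\boldsymbol x|\to\infty$ the quadratic term dominates (the $\psi$ terms are bounded below since $\psi(x)\to 0$ as $x\to\infty$), giving $\Phi\to\infty$. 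Therefore $\Phi$ attains its minimum at an interior point, which is the unique solution, and it lies in $(0,1)^k$ in the $Q$-variables.

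The main obstacle is bookkeeping the asymmetry introduced by the factor $2$: the naturally symmetric object is the quadratic form $\tfrac12\boldsymbol x^TA\boldsymbol x$, and one has to verify that the correct "entropy" correction is $\sum_{i<k}\psi(x_i)+2\psi(x_k)$ rather than something that would break the gradient computation — this is exactly arranged by the $D$ above, and I would double-check it by differentiating componentwise. A secondary point worth a sentence is that positive definiteness of $A$ is used only through strict convexity of the quadratic part, so the lemma's hypothesis is exactly what is needed; no structural feature of the specific matrix $A$ from \eqref{A-inverse} enters. Once strict convexity, boundary repulsion, and coercivity are in hand, existence and uniqueness follow from elementary convex analysis, so I would keep that final paragraph to one or two lines.
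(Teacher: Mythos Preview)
Your proposal is correct and takes essentially the same approach as the paper: the paper simply cites \cite[Lemma~2.1]{VZ} and records the single required modification, namely replacing the potential $\tfrac12\,\mathbf{x}^TA\,\mathbf{x}+\sum_{i}\mathrm{Li}_2(e^{-x_i})$ by $\tfrac12\,\mathbf{x}^TA\,\mathbf{x}+\sum_{i<k}\mathrm{Li}_2(e^{-x_i})+2\,\mathrm{Li}_2(e^{-x_k})$, which (up to an additive constant, and correcting the harmless sign slip in your identification $\psi(x)=-\mathrm{Li}_2(e^{-x})$) is exactly your $\Phi$. Your strict convexity plus coercivity argument is precisely the Vlasenko--Zwegers proof, spelled out.
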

\begin{proof}
The proof follows along the lines of \cite[Lemma 2.1]{VZ}.
The only modification required is to choose $$f_A({\bf x})=\frac12 {\bf x}^T A {\bf x} + 2 {\rm  Li}_2({\rm Exp}(-x_k))+\sum_{i=1}^{k-1} {\rm  Li}_2({\rm Exp}(-x_i)),$$ 
and proceed as in \cite{VZ}.
\end{proof}
%From the shape of our identities 

%Observe that $f(e^{-t}) \sim a +O(t)$ so this part has no exponential growth. Therefore we expect that for all identities $\alpha$ content  is constant.
%We consider three examples coming from identities:
\subsection{Even case}
Recall (for $k \geq 1$ )
\begin{equation} \label{BM}
F(q):=\frac{q^{-\frac{k^2}{4(k+1)}}}{(q;q)_\infty} {\sum_{n \in \mathbb{Z}} {\rm sgn}(n) q^{(k+1)(n+\frac{k}{2(k+1)})^2}}
=\sum_{n_1,n_2,\dotsc,n_{k} \geq 0} \frac{q^{N_1^2+N_2^2+\cdots+N_{k}^2+N_1+N_2+\cdots + N_{k}}}
{(q)_{n_{k}}^2 (q)_{n_1} (q)_{n_2} \cdots (q)_{n_{k-1}}}.
\end{equation}
We let $f(q)=\sum_{n \in \mathbb{Z}} {\rm sgn}(n) q^{k(n+\frac{k-1}{2k})^2}$.
It is known that (as $ t \to 0^+$) (e.g. \cite[Proposition 6.5]{Zagier1})
$$f(e^{-t}) \sim \frac{1}{k} + O(t).$$
Therefore 
$$e^{-\frac{\pi^2}{6t}} F(e^{-t}) \sim \sqrt{\frac{t}{2\pi}}  \left( \frac{1}{k} +O(t) \right),$$
where we have used the well-known behavior $\eta(e^{-t})=q^{1/24}(q)_\infty |_{q=e^{-t}}  \sim  \sqrt{\frac{2\pi}{t}} e^{-\frac{\pi^2}{6t}}$.
Now we look at the $q$-hypergeometric side. Because of the $\frac{1}{(q)_{n_{k}}^2}$  we are led to a system of $k \times k$ equations (cf. \cite{VZ}) 
\begin{align}
\label{TBA}
 (1-Q_i) &=\prod_{j \geq 1} Q_{j}^{B_{i,j}}, \ \ 1 \leq i \leq k-1, \\
  (1-Q_k)^2 & =\prod_{j \geq 1} Q_{j}^{B_{k,j}}, \nonumber
\end{align}
where $B=(B_{i,j})=(2{\rm min}(i,j))$ is the matrix
corresponding to the quadratic form in the exponent of the RHS in (\ref{BM}).

Applying the procedure in \cite{VZ} for computing the asymptotic exponent 
$\alpha$ of $F(q)$ in terms of $L(x)$, with slight modifications due to the squared factor $(q)_{n_{k}}^2$,  gives
the following identity.
\begin{proposition} \label{dilog-log} For $k \geq 2$, let $\{ Q_i \}$ denote the unique solution of (\ref{TBA}) 
inside the interval $(0,1)$. Then 
\begin{equation} \label{TBA-Dilog}
2(L(1)- L(Q_k))+\sum_{r \geq 1}^{k-1} (L(1)-L(Q_r))=\frac{\pi^2}{6}.
\end{equation}
\end{proposition}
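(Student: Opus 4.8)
The plan is to extract the asymptotic exponent $\alpha$ of $F(q)$ in two ways and equate them. On the false-theta side, we already know from the stated asymptotic $f(e^{-t})\sim \tfrac1k + O(t)$ and the behavior of $\eta$ that $e^{-\frac{\pi^2}{6t}}F(e^{-t})\sim \sqrt{\tfrac{t}{2\pi}}\left(\tfrac1k + O(t)\right)$, so the leading exponential factor of $F(e^{-t})$ is $e^{\pi^2/(6t)}$, i.e. $\alpha = \frac{\pi^2}{6}$. On the Nahm-sum side, $F(q)$ is the multi-hypergeometric series on the right of \eqref{BM} with quadratic form matrix $B=(2\min(i,j))$, except that the last variable carries $(q)_{n_k}^{-2}$ rather than $(q)_{n_k}^{-1}$. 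The first step is therefore to run the saddle-point analysis of \cite{VZ}, modified exactly as in the preceding Lemma on \eqref{TBA-org-log}, to show that for such a ``doubled-pole'' Nahm sum the asymptotic exponent is
\begin{equation*}
\alpha = 2\bigl(L(1)-L(Q_k)\bigr) + \sum_{r=1}^{k-1}\bigl(L(1)-L(Q_r)\bigr),
\end{equation*}
where $\{Q_i\}$ is the unique solution in $(0,1)$ of the modified TBA system \eqref{TBA}. Combining the two computations of $\alpha$ yields \eqref{TBA-Dilog}.

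More concretely, I would set $q=e^{-t}$, replace each $(q)_{n_i}$ by its known asymptotics, and approximate the sum by an integral whose integrand is $\exp\!\bigl(-\tfrac1t \Phi(x_1,\dots,x_k)\bigr)$ where, writing $x_i = t n_i$ (and accounting for the squared factor by doubling the dilogarithm term in the last slot), $\Phi$ is essentially the function $f_A$ appearing in the Lemma's proof: $\Phi(\mathbf x)=\tfrac12\mathbf x^T B\mathbf x + 2\,\mathrm{Li}_2(e^{-x_k}) + \sum_{i=1}^{k-1}\mathrm{Li}_2(e^{-x_i})$ after the linear terms $N_1+\dots+N_k$ are absorbed. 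Its critical point is exactly the solution of \eqref{TBA}, and the standard computation identifying the critical value of such a potential with a sum of Rogers dilogarithm values (via $\tfrac{d}{dx}\mathrm{Li}_2(e^{-x}) = -\log(1-e^{-x})$ together with the defining relations, and the identity $L(x)+L(1-x)=L(1)$) gives the critical value $-\alpha$ with $\alpha$ as above. I would cite \cite{VZ}, \cite{Zagier}, \cite{Kirillov} for the justification that the saddle-point estimate is valid (i.e. that subexponential corrections do not affect the leading exponential rate), since the paper explicitly invokes ``well-established methods''.

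The main obstacle, and the only genuinely new ingredient beyond \cite{VZ}, is bookkeeping the effect of the squared Pochhammer $(q)_{n_k}^{-2}$: one must check that it simply doubles the contribution of the $k$-th coordinate both in the TBA equation (turning $(1-Q_k)$ into $(1-Q_k)^2$) and in the dilogarithm sum (turning $L(1)-L(Q_k)$ into $2(L(1)-L(Q_k))$), and that positive-definiteness of the relevant Hessian — guaranteed by the Lemma — still holds so that the saddle point is unique and nondegenerate. Once this is in place, equating $\alpha=\tfrac{\pi^2}{6}$ (from the false-theta/$\eta$ side) with $\alpha = 2(L(1)-L(Q_k)) + \sum_{r=1}^{k-1}(L(1)-L(Q_r))$ (from the Nahm-sum side) is immediate and completes the proof.
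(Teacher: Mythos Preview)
Your approach is valid in principle but takes a genuinely different route from the paper. What you propose---extracting the exponential growth rate of both sides of the $q$-series identity \eqref{BM} and equating them---is precisely how the paper \emph{motivates} the proposition in the paragraph immediately preceding it (``Applying the procedure in \cite{VZ} \dots\ gives the following identity''). For the actual proof, however, the paper abandons asymptotics entirely and argues directly: it writes down the explicit solution of the TBA system \eqref{TBA}, namely $Q_k=\tfrac{k}{k+1}$ and $Q_r=1-\tfrac{1}{(r+1)^2}$ for $1\le r\le k-1$, uses the reflection formula $L(x)+L(1-x)=L(1)$ to rewrite \eqref{TBA-Dilog} as the known identity $2L\bigl(\tfrac{1}{k+1}\bigr)+\sum_{r=2}^{k}L\bigl(\tfrac{1}{r^2}\bigr)=\tfrac{\pi^2}{6}$, and then proves the latter by induction on $k$ via a consequence of the five-term relation. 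The paper's argument is thus completely elementary and self-contained, requires no analysis, and as a bonus exhibits the $Q_i$ explicitly. Your argument is more conceptual---it explains the identity as the shadow of the $q$-series identity---but to make it a proof you must actually carry out (not merely cite) the adaptation of \cite{VZ} to the doubled-pole factor $(q)_{n_k}^{-2}$; this is routine, as you note, but it is work that the references do not do for you.
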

\begin{proof}  It is not hard to see that $Q_k=\frac{k}{k+1} $, $Q_1=1-\frac{1}{2^2},...,Q_{k-1}=1-\frac{1}{k^2}$ is the solution of (\ref{TBA}) inside $(0,1)$.
Using $L(x)+L(1-x)=L(1)$, identity (\ref{TBA-Dilog}) is equivalent to 
\begin{equation} \label{kirillov}
2L \left(\frac{1}{k+1}\right)+\sum_{r \geq 2}^k L\left(\frac{1}{r^2}\right)=\frac{\pi^2}{6}.
\end{equation}
The last identity is known and mentioned in \cite[Exercise 6]{Kirillov}. It can be easily proven 
by induction using the relation
\begin{equation} \label{Rog-help}
{L}\left(1-\frac{1}{x^2}\right)+2
   \left({L}\left(\frac{1}{x}\right)+{L}\left(\frac{x}{x+1}\right)\right)=\frac{\pi^2}{2},
   \end{equation}
with $x=k$, a consequence of the Five-Term relation for Rogers' dilogarithm \cite[Formula (1.4)]{Kirillov}.
\end{proof}

%{\bf AM: rewritten and shortened}

\subsection{$N=1$ super identities}
In this part we connect the $q$-series identities
%{\bf CJS: I changed the first exponent out front to $q^{-\frac{k^2}{2(2k+1)}}$ from $q^{-\frac{1}{2(2k+1)}}$. AM; ok, thanks.}
\begin{eqnarray}
\label{CM} 
&& q^{-\frac{k^2}{2(2k+1)}} \frac{(-q^{1/2})_\infty}{(q)_\infty} \sum_{n \in \mathbb{Z}} {\rm sgn}(n) q^{\frac{2k+1}{2} \left(n+\frac{k}{2k+1}\right)^2} \nonumber  \\
&&=\sum_{n_1,n_2,\dotsc,,n_k \geq 0}  
\frac{q^{\frac{N_1^2}{2}+N_2^2 + N_3^2 + \cdots + N_{k}^2+N_1+N_2+\cdots + N_{k}} (-q^{1/2})_{N_1}}
	{(q)_{n_k}^2 (q)_{n_1} (q)_{n_2} \cdots (q)_{n_{k-1}}} 
\end{eqnarray}
with dilogarithm identities. Here the situation is slightly more complicated due to 
the additional $q$-Euler factor, $(-q^{1/2})_{N_1}$,  appearing in the numerator. Observe that the leading asymptotics of the left-hand side in (\ref{CM}) is easily determined. After letting $q=e^{-t}$, it grows as $C e^{\frac{\pi^2}{4 t}}$ as $t \to 0^+$, which follows 
from using $q^{-\frac{1}{48}} (-q^{\frac12})_\infty =\frac{\eta(\tau)^2}{\eta(\frac{\tau}{2}) \eta(2 \tau)}$.
%{\bf CJS: I changed the exponent from $\frac{1}{48}$ to $-\frac{1}{48}$}.
 Therefore we expect a family of identities for Rogers' dilogarithm as in (\ref{kirillov}), where the right-hand side equals $\frac{\pi^2}{4}$. 
%{\bf AM: I still do not know how to modify \cite{VZ} so that it applies to our identity so I'm just doing examples.}
%We have an "odd" version of identity (\ref{kirillov}).
\begin{proposition}\label{PropositionMoreDilogarithms1}
For $k \in \mathbb{N}$,
\begin{equation} \label{super-kirillov}
2L \left(\frac{2}{2k+1} \right)+\sum_{r = 1}^{k-1} L\left(\frac{4}{(2r+1)^2}\right)+L\left(\frac{3}{4}\right)-L\left(\frac12\right)=\frac{\pi^2}{4}.
\end{equation}
\end{proposition}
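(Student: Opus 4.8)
The plan is to prove the Rogers-dilogarithm identity \eqref{super-kirillov} directly, by induction on $k$, in exactly the spirit of the proof of Proposition \ref{dilog-log}. The only inputs needed are the five-term-relation consequence \eqref{Rog-help}, the reflection formula $L(x)+L(1-x)=\frac{\pi^2}{6}$, and the evaluation $L(\frac12)=\frac{\pi^2}{12}$. The analytic content — the reason the right-hand side is $\frac{\pi^2}{4}$ and not $\frac{\pi^2}{6}$, and the reason the two ``extra'' terms $L(\frac34)-L(\frac12)$ appear — comes from applying the saddle-point/TBA procedure of \cite{VZ} to \eqref{CM}: the Euler factor $(-q^{1/2})_{N_1}$ in the numerator (together with $q^{-1/48}(-q^{1/2})_\infty=\eta(\tau)^2/(\eta(\tfrac{\tau}{2})\eta(2\tau))$) forces the left side of \eqref{CM} to grow like $Ce^{\pi^2/(4t)}$, so the accumulated $L$-contributions must total $\frac{\pi^2}{4}$. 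Once the identity is guessed in this way, the cleanest verification is purely the induction below.

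For the base case $k=1$, the claim reduces to $2L(\frac23)+L(\frac34)-L(\frac12)=\frac{\pi^2}{4}$. Taking $x=2$ in \eqref{Rog-help} gives $L(\frac34)+2L(\frac12)+2L(\frac23)=\frac{\pi^2}{2}$, hence $2L(\frac23)+L(\frac34)-L(\frac12)=\frac{\pi^2}{2}-3L(\frac12)=\frac{\pi^2}{2}-\frac{\pi^2}{4}=\frac{\pi^2}{4}$.

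For the inductive step with $k\ge2$, subtract the instance of \eqref{super-kirillov} for $k-1$ from the instance for $k$: the sums differ only by the single term $L(\frac{4}{(2k-1)^2})$, and the constants $L(\frac34)-L(\frac12)$ cancel, so it suffices to establish
\[
2L\!\left(\frac{2}{2k-1}\right)=2L\!\left(\frac{2}{2k+1}\right)+L\!\left(\frac{4}{(2k-1)^2}\right).
\]
This is \eqref{Rog-help} with $x=\frac{2k-1}{2}$: indeed $\frac1x=\frac{2}{2k-1}$, $\frac{x}{x+1}=\frac{2k-1}{2k+1}$, and $1-\frac{1}{x^2}=1-\frac{4}{(2k-1)^2}$, so \eqref{Rog-help} reads $L(1-\frac{4}{(2k-1)^2})+2L(\frac{2}{2k-1})+2L(\frac{2k-1}{2k+1})=\frac{\pi^2}{2}$; replacing $L(1-\frac{4}{(2k-1)^2})=\frac{\pi^2}{6}-L(\frac{4}{(2k-1)^2})$ and $L(\frac{2k-1}{2k+1})=\frac{\pi^2}{6}-L(\frac{2}{2k+1})$ via the reflection formula, and using $\frac{\pi^2}{6}+\frac{\pi^2}{3}=\frac{\pi^2}{2}$, yields exactly the displayed identity.

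I expect no serious obstacle in the induction itself; the only subtlety is that \eqref{super-kirillov} is not on a standard list (unlike \eqref{kirillov}), so one must find the correct bookkeeping, and the point worth checking is that the terms $L(\frac34)-L(\frac12)$ coming from the Euler factor $(-q^{1/2})_{N_1}$ are precisely what allow the base case and the telescoping inductive step to close against \eqref{Rog-help}. The genuinely delicate route — which the induction avoids — would be to \emph{derive} \eqref{super-kirillov} from the asymptotics of \eqref{CM}, since computing the Rogers-dilogarithm contribution of the numerator factor $(-q^{1/2})_{N_1}$ takes \eqref{CM} outside the pure Nahm-sum framework treated in \cite{VZ}.
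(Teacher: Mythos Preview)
Your proof is correct. The base case and the telescoping inductive step are both clean applications of \eqref{Rog-help} (at $x=2$ and $x=\tfrac{2k-1}{2}$ respectively), together with the reflection formula and $L(\tfrac12)=\tfrac{\pi^2}{12}$; the arithmetic checks out.

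Your argument is precisely the ``alternative'' route the paper mentions in its final sentence but does not write out. The paper's primary proof instead \emph{derives} \eqref{super-kirillov} from the asymptotics of \eqref{CM}: it expands $(-q^{1/2})_{N_1}$ via the $q$-binomial theorem to introduce an auxiliary variable $n_{k+1}$, writes down the resulting TBA system (with an extra equation for $Q_{k+1}$), exhibits the explicit solution $Q_k=\tfrac{2k-1}{2k+1}$, $Q_r=1-\tfrac{4}{(2r+1)^2}$, $Q_{k+1}=\tfrac14$, and reads off the dilogarithm identity (the $L(\tfrac34)$ comes from $Q_{k+1}=\tfrac14$, and the $-L(\tfrac12)$ is the correction term \cite[(5.10)]{KKMM} for the numerator factor). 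That route explains \emph{why} the specific arguments $\tfrac{2}{2k+1}$, $\tfrac{4}{(2r+1)^2}$, $\tfrac34$ appear, whereas your induction takes them as given; conversely, your argument is entirely self-contained and avoids the saddle-point machinery you rightly flag as lying outside the pure Nahm-sum framework of \cite{VZ}.
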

\begin{proof} As in \cite{KKMM}, we first write
\begin{align*}
F(q)&:= \sum_{n_1,n_2,\dotsc,n_k \geq 0}  
	\frac{q^{\frac{N_1^2}{2}+N_2^2 + N_3^2 + \cdots + N_{k}^2+N_1+N_2+\cdots + N_{k}} (-q^{1/2})_{N_1}}
	{(q)_{n_k}^2 (q)_{n_1}(q)_{n_2} \cdots (q)_{n_{k-1}}} \\
&= 
\sum_{n_1,n_2,\dotsc,n_{k+1} \geq 0}  
\frac{q^{N_1^2+N_2^2 + \cdots + N_{k}^2+N_1+N_2+\cdots + N_{k}-N_{1}n_{k+1} +\frac{n_{k+1}^2}{2}}}
	{(q)_{n_k}^2 (q)_{n_1}(q)_{n_2} \cdots (q)_{n_{k-1}}} \left[ N_1 \atop n_{k+1} \right]_q,
\end{align*}
where $\left[ m \atop n \right]_q := \frac{(q)_m}{(q)_n(q)_{m-n}}$ are the $q$-binomial coefficients.
For both multi-sums above, $N_j=n_{j}+n_{j+1}+\dotsb+n_k$, in particular
$N_j$ does not end with $n_{k+1}$ in the latter sum.
As before, saddle point analysis gives the system of equations: 
%{\bf CJS: In the first equation below, I changed the upper bound on the indices of the
%product from $k-1$ to $k$.
%}
%\begin{align*}
%\label{TBA-S} (1-Q_1)^2 & =\prod_{j = 1}^k Q_{j}^{B_{1,j}}  Q_{k+1}^{-1}\\
% (1-Q_i) &=\prod_{j = 1}^k Q_{j}^{B_{i,j}} Q_{k+1}^{-1}, \ \ 2 \leq i \leq k  \\ 
% (1-Q_{k+1}) & =Q_1^{-1} \cdots Q_k^{-1} Q_{k+1}.
%\end{align*}
\begin{align*}
\label{TBA-S} (1-Q_k)^2 & =\prod_{j = 1}^{k} Q_{j}^{B_{k,j}}  Q_{k+1}^{-1},\\
 (1-Q_i) &=\prod_{j = 1}^k Q_{j}^{B_{i,j}} Q_{k+1}^{-1}, \ \ 1 \leq i \leq k-1,  \\ 
 (1-Q_{k+1}) & =Q_1^{-1} \cdots Q_k^{-1} Q_{k+1}.
\end{align*}

As argued in \cite[Section 5]{KKMM}, specifically their formula (5.10), we get
%Analyzing the asymptotic behavior of $F(q)$, using methods  \cite{VZ,Melzer} leads to 
%dentity 
%{\bf CJS: In the identity below, to the left hand side I added the term:
%$L(1)- L(Q_{k+1})$
%}
$$2 (L(1)-L(Q_{k})) + \left(L(1)- L(Q_{k+1})\right)
+\sum_{i=1}^{k-1} (L(1)-L(Q_i))-\left(L(1)-L\left(\frac12\right)\right) =\frac{\pi^2}{4},$$
with the extra term $L(1)-L(\frac12)=L(\frac12)=\frac{\pi^2}{12}$ coming from the factor $(-q^{1/2})_{N_1}$ in the sum.
It is easy to check that 
$$Q_k=\frac{2k-1}{2k+1}, \  Q_{r}=1-\frac{4}{(2r+1)^2}  \ \ (1 \leq r \leq k-1), \ \ Q_{k+1}=\frac{1}{4}$$
is a solution of the above system. The proof then follows.
Alternatively, we can prove (\ref{super-kirillov}) directly by induction on $k$ using relation (\ref{Rog-help}).
\end{proof}
\begin{remark} 
%The same identity can be proven by studying asymptotics of the RHS in (\ref{CM}). 
We note that taking $k \to +\infty$ in (\ref{super-kirillov}) leads to 
\begin{equation} \label{super-kirillov-inf}
\sum_{r \geq 1} L\left(\frac{4}{(2r+1)^2}\right)+L\left(\frac{3}{4}\right)-L\left(\frac12\right)=\frac{\pi^2}{4}.
\end{equation}
This identity can be viewed as a limiting case of Melzer's dilogarithm identity  coming from $N=1$ superconformal characters \cite[Formula (3.12)]{Melzer}:
\begin{equation} \label{melzer}
\sum_{r=1}^{k-1} L \left(\frac{\sin^2 \frac{\pi}{2k}}{\sin^2 \frac{(2r+1) \pi}{4k}}\right)+L\left(1-\frac{1}{4 \cos^2 \frac{\pi}{4k} }\right)-L\left(\frac12 \right)=\pi^2 \frac{(k-1)}{4 k}.
\end{equation}
Indeed, formally taking $ k \to +\infty$ in (\ref{melzer}), we get (\ref{super-kirillov-inf}) by L'hopital's rule.
\end{remark}

\subsection{Further Dilogarithm Identities}

We are not aware of any $q$-series identities connecting 
\begin{equation} \label{new}
\sum_{n_1,n_2,\dotsc,n_k \geq 0}  \frac{q^{\frac{N_1^2}{2}+N_2^2 + N_3^2+ \cdots + N_{k}^2+N_1+N_2+\cdots + N_{k}}}
{(q)_{n_k}^2 (q)_{n_1}(q)_{n_2} \cdots (q)_{n_{k-1}}} 
\end{equation}
with a false theta-type $q$-series. 
The proposition below gives a dilogarithm identity that suggests a relation with a false theta series
is plausible.
\begin{proposition}\label{PropositionMoreDilogarithms2}
For $k\in\N$, let $\{Q_i \}$ be the unique solution of 
\begin{equation} \label{TBA-3}
(1-Q_k)^2= \prod_{1 \leq j \leq k} Q_j^{C_{k,j}}, \ \ (1-Q_i)=\prod_{1 \leq j \leq k} Q_j^{C_{i,j}}; \ \ \  1 \leq i \leq k-1,
\end{equation}
inside the interval $(0,1)$,  where $(C_{i,j})=(2{\rm min}(i,j)-1)$ is the matrix corresponding to the quadratic form in the exponent of the $q$-hypergeometric series (\ref{new}).
Then 
$$ 2(L(1)-L(Q_k))+ \sum_{i = 1}^{k-1}  (L(1)-L(Q_i))=\frac{\pi^2}{5}.$$
\end{proposition}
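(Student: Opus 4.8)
The plan is to follow the same asymptotic strategy used for Proposition~\ref{dilog-log} and Proposition~\ref{PropositionMoreDilogarithms1}. First I would identify explicitly the unique solution of the system \eqref{TBA-3}. Since $(C_{i,j}) = (2\min(i,j)-1)$, the system reads $(1-Q_i) = \prod_{j} Q_j^{2\min(i,j)-1}$ for $1\le i\le k-1$ and $(1-Q_k)^2 = \prod_j Q_j^{2\min(k,j)-1}$. Guided by the pattern in the earlier propositions (where $Q_r = 1-1/r^2$ or $1-4/(2r+1)^2$ came from a Chebyshev/$\mathrm{SU}(2)$-type recursion), I expect the solution here to be of the form $Q_r = 1 - \frac{c}{f(r)^2}$ for a linear $f$, together with a ``boundary'' value $Q_k$ of the form $\frac{a}{a+1}$; one pins these down by substituting the ansatz and checking the $k$ equations telescope. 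The most natural guess, consistent with the right-hand side being $\pi^2/5$, is that the relevant one-dimensional model is the one whose central-charge-type constant is $\frac{6}{5}$, i.e. the Rogers--Ramanujan / $\mathrm{SU}(2)$ at level... — in any case, a short computation with the telescoping product will confirm the exact values of the $Q_i$.

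Second, with the $Q_i$ in hand, the identity $2(L(1)-L(Q_k)) + \sum_{i=1}^{k-1}(L(1)-L(Q_i)) = \frac{\pi^2}{5}$ becomes, via $L(x)+L(1-x)=L(1)$, a pure Rogers-dilogarithm identity in the values $1-Q_i$. I would then prove this by induction on $k$ using the five-term-relation consequence \eqref{Rog-help},
$$
L\!\left(1-\tfrac{1}{x^2}\right) + 2\left(L\!\left(\tfrac1x\right)+L\!\left(\tfrac{x}{x+1}\right)\right) = \tfrac{\pi^2}{2},
$$
exactly as in the proofs of \eqref{kirillov} and \eqref{super-kirillov}: the inductive step replaces the ``tail'' term $L(1-Q_{k-1})$ (or the corresponding $L(1/x^2)$) by an expression involving $L(1/x)$ and $L(x/(x+1))$, which matches the new boundary term at level $k$, leaving the telescoping sum intact. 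The base case $k=1$ (and perhaps $k=2$) is a finite dilogarithm evaluation that can be checked directly against known special values such as $L(1/2)=\pi^2/12$ and $L\!\left(\tfrac{3-\sqrt5}{2}\right)=\tfrac{\pi^2}{15}$, $L\!\left(\tfrac{\sqrt5-1}{2}\right)=\tfrac{\pi^2}{10}$.

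Alternatively — and this is the route I would actually write up, mirroring the end of the proof of Proposition~\ref{dilog-log} — I would invoke the general asymptotic machinery of \cite{VZ} (and its modification in the Lemma preceding the ``Even case'' subsection, which handles a squared factor $(q)_{n_k}^2$ in the denominator). The $q$-hypergeometric series \eqref{new} has matrix $A^{-1}$-data equal to $(C_{i,j})$ after the $N$-variable substitution, so its $t\to0^+$ asymptotic exponent $\alpha$ equals $\sum_{i=1}^{k-1}(L(1)-L(Q_i)) + 2(L(1)-L(Q_k))$ by the Vlasenko--Zwegers formula adapted to the double pole. On the other hand, one must argue that this same $\alpha$ equals $\frac{\pi^2}{5}$; heuristically this should come from recognizing \eqref{new}, up to an $(q)_\infty$-power and a rational $q$-shift, as (a piece of) a character whose modular leading exponent is governed by effective central charge $c_{\mathrm{eff}} = \frac{6}{5}\cdot(\text{something})$, giving $\alpha = \frac{\pi^2}{6}\cdot c_{\mathrm{eff}}$. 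Since the excerpt explicitly says no such $q$-series identity is known, the clean and rigorous route is the direct dilogarithm computation above, so the write-up would: (i) state the explicit $Q_i$, (ii) verify they solve \eqref{TBA-3} by the telescoping product, and (iii) prove the resulting dilogarithm sum equals $\pi^2/5$ by induction via \eqref{Rog-help}.

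\textbf{Main obstacle.} The only real difficulty is step (i): correctly guessing the closed form of the solution $\{Q_i\}$ of \eqref{TBA-3}. Once the $Q_i$ are known, verifying \eqref{TBA-3} is a routine product telescoping, and the dilogarithm identity then reduces — just as in the two preceding propositions — to an induction powered by the single five-term consequence \eqref{Rog-help}, with the base case being a known finite evaluation. If the naive ``$1-c/f(r)^2$'' ansatz fails to telescope, the fallback is to determine the $Q_i$ numerically for small $k$, recognize the algebraic numbers (they should be related to values of $2\cos(\pi/m)$ for a suitable $m$ tied to the $\pi^2/5$), and then prove the general formula by the same induction.
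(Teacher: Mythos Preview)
Your proposal is correct and matches the paper's approach exactly: the paper writes down the explicit solution $Q_r = 1 - \tfrac{1}{(a+r)^2}$ for $1 \le r \le k-1$ and $Q_k = 1 - \tfrac{1}{a+k}$ with $a = \tfrac{\sqrt{5}-1}{2}$ (so your ``$1-c/f(r)^2$ with linear $f$'' ansatz is spot on, and the golden-ratio base values you anticipated are precisely what appear), and then finishes by induction on $k$ via the five-term consequence \eqref{Rog-help}, just as you outlined in step (iii). Your identification of step (i) as the only real obstacle is accurate, and your $2\cos(\pi/m)$ fallback would indeed have led you to $a = 2\cos(2\pi/5)$.
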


\begin{proof}
Let $$a=\frac{1}{2}(\sqrt{5}-1).$$
It is not hard to see that 
%$$Q_1=1-\frac{1}{a+k}, \ Q_2=1-\frac{1}{(a+k-1)^2},...., \ Q_{k-1}=1-\frac{1}{(a+2)^2},  \ Q_k=a$$
$$Q_1=a, Q_2=1-\frac{1}{(a+2)^2},....,Q_{k-1},=1-\frac{1}{(a+k-1)^2}, Q_k=1-\frac{1}{a+k}$$
is the unique solution of (\ref{TBA-3}) inside $(0,1)$; when $k=1$ we have $Q_1=1-\frac{1}{a+1}$. 
The rest follows by induction on $k$ using the five-term relation as in Proposition \ref{dilog-log}.
\end{proof}

\section{Final comments on future work}

In a sequel to this paper \cite{JM}, we plan to study $q$-series identities involving expressions with even ``higher poles'' (cf. Section 5)
$$\sum_{n_1,n_2,\dotsc,n_k \geq 0} \frac{q^{\sum_{i=1}^{k}n_i+\sum_{i=1}^{k-1} n_i n_{i+1}}}
{(q)_{n_1}^{r_1}(q)_{n_2}^{r_2} \cdots (q)_{n_k}^{r_k}}.$$
%where $1 \leq r_i \leq k$ are subject to 
%$$k \leq r_1+r_2 + \cdots + r_k \leq 2k.$$
We will  also extend Theorems \ref{euler-false-p} and Theorem \ref{euler-modular-p}
to half-characteristics, where the relevant $q$-hypergeometric series are
$$\sum_{n_1,n_2,\dotsc,n_{k} \geq 0 } 
\frac{q^{\sum_{i=1}^{k} n_i^2-\sum_{i=1}^{k-1} n_{i}n_{i+1}}(-q^{\epsilon})_{n_1}}
{(q)_{n_1}^2 (q)_n^2 \cdots (q)_{n_{k}}^2}.$$
Moreover, motivated by \cite{GPPV}, we will study identities for ``inverted'' $q$-hypergeometric terms, after 
the formal inversion $q \to q^{-1}$.

\end{document}